\documentclass[reqno, 12pt]{amsart}

\usepackage{amsmath,amsthm,amssymb,amscd}
\usepackage{mathrsfs}
\usepackage{latexsym}
\usepackage{color}
\usepackage{graphicx}
\usepackage[colorlinks=true]{hyperref}
\hypersetup{urlcolor=blue, citecolor=blue,linkcolor=blue}
\newtheorem{theorem}{Theorem}[section]
\newtheorem{corollary}[theorem]{Corollary}
\newtheorem{lemma}[theorem]{Lemma}
\newtheorem{proposition}[theorem]{Proposition}

\newtheorem{remark}[theorem]{Remark}
\newtheorem{example}[theorem]{Example}
\newtheorem{conjecture}[theorem]{Conjecture}
\theoremstyle{definition} \theoremstyle{remark}
\numberwithin{equation}{section}

\def\R{\mathbb R}

\def\P{\mathbb{P}}

\def\<{\langle}
\def\>{\rangle}

\newcommand{\tr}{\operatorname{tr}}

\title[Near Geometric and Near Spectral Geometric Curves]{Order Relations for the Near Geometric and Near Spectral Geometric Curves}

\author[M. A. Ibrahim]{Muritala Abiodun Ibrahim}
\address{Department of Mathematics and Statistics, Auburn University,
	AL 36849--5310, USA}
\email{mai0015@auburn.edu}

\author[H. Huang]{Huajun Huang}
\address{Department of Mathematics and Statistics, Auburn University,
	AL 36849--5310, USA}
\email{huanghu@auburn.edu}

\begin{document}
	
	\begin{abstract}
We investigate order and spectral properties of the weighted near geometric curve
\(A\star_tB=A^{1/2}(B^{1/2}A^{-1}B^{1/2})^tA^{1/2}\) 
on the cone of positive definite matrices. Our focus is on comparisons governed by the L\"{o}wner order, 
eigenvalue inequalities, and log-majorization. We establish a normalized eigenvalue comparison with the weighted arithmetic mean, 
give a sufficient condition for the corresponding L\"{o}wner inequality, and obtain conditional log-majorization relations. 
Motivated by the relationship between the weighted geometric and spectral geometric means, 
we introduce the near spectral geometric curve \(A\widetilde{\natural}_tB=(A^{-1}\star B)^tA(A^{-1}\star B)^t\), where \(\star=\star_{1/2}\). 
Setting \(D=A^{-1}\star_1B\), we prove that \(A\widetilde{\natural}_tB=A\natural_tD\) and \(B\prec_{\log}D\), 
and show that \(D=B\) if and only if \(A\) and \(B\) commute. We further characterize commutativity and near-order monotonicity, 
prove midpoint isospectrality, and derive affine interpolation and constant-speed identities. 
Finally, we establish a Lie–Trotter formula and additional log-majorization inequalities. 
These results connect the two near geometric constructions with the geometric  and  spectral geometric means and 
clarify their   order and spectral behavior.
\end{abstract}
	
	\subjclass[2020]{Primary 47A64; Secondary 47A63, 15A42, 15A45}
	\keywords{near geometric mean, near spectral geometric mean, near order, L\"owner order, log-majorization, Lie--Trotter formula}
	
	\maketitle
	
	\section{Introduction}

Matrix and operator means are fundamental objects in  operator theory,
with important connections to numerous  other areas, including Riemannian geometry
\cite{Moakher2005,BhatiaHolbrook2006}, diffusion tensor imaging and medical imaging
\cite{Batchelor2005,BiniIannazzo2024}, signal and image processing
\cite{EstaticoDiBenedetto2013}, machine learning
\cite{ZadehHosseiniSra2016}, and quantum information
\cite{KimLee2015}.
Among the classical
examples are the weighted geometric mean 
\begin{equation}\label{geometric mean}
    A\sharp_tB
	=
	A^{1/2}
	\left(A^{-1/2}BA^{-1/2}\right)^t
	A^{1/2}
\end{equation}
and the
Fiedler--Pt\'ak spectral geometric mean \cite{FiedlerPtak1997}
\begin{equation}\label{spectral geometric mean}
    	A\natural_tB
	=
	(A^{-1}\sharp B)^tA(A^{-1}\sharp B)^t,
\end{equation}
where $A$ and $B$ are on the cone $\mathbb P_n$ of positive definite matrices and $t\in [0,1].$
Related discussions in matrix  and operator theory can be found in \cite{PuszWoronowicz1975, KuboAndo1980, bhatia07,GAN2021,GAN2024}.
Recently, Franco, Gan, and Kim \cite{Franco2025} introduced the weighted
\emph{near geometric mean}
\begin{equation}\label{Q0}
A\star_tB
=
A^{1/2}(B^{1/2}A^{-1}B^{1/2})^tA^{1/2}.
\end{equation}
When \(A\) and \(B\) commute, \(A\star_tB=A^{1-t}B^t\). In general,
however, the near geometric mean does not inherit the usual Kubo--Ando
monotonicity of \(A\sharp_tB\). It can nevertheless be represented as a
weighted geometric mean with a transformed second input:
\begin{equation}\label{eq:near-geometric-to-geometric}
A\star_tB=A\sharp_tC,
\qquad
C=A\star_1B
=
A^{1/2}B^{1/2}A^{-1}B^{1/2}A^{1/2}.
\end{equation}

Our first aim is to clarify how \(A\star_tB\) compares with classical matrix
means. Motivated by a question in \cite{Franco2025}, we prove the normalized
eigenvalue inequality
\[
A^{-1/2}(A\star_tB)A^{-1/2}
\le_\lambda
(1-t)I+tA^{-1/2}BA^{-1/2},
\qquad 0\le t\le1,
\]
and obtain a sufficient condition, expressed through the extremal
eigenvalues of the normalized matrix, for the corresponding L\"owner
inequality. We also show that neither this comparison
nor the corresponding weak log-majorization holds without additional
hypotheses, and we derive a conditional log-majorization chain involving
the \(F\)-mean, a R\'enyi-type mean, the spectral geometric mean, and the
Bures--Wasserstein mean.

Our second aim is to study the spectral construction associated with
\eqref{Q0}. For \(A,B\in\mathbb P_n\), we define the \emph{near spectral geometric curve} 
\begin{equation}\label{intro:near-spectral}
A\widetilde{\natural}_tB
=
(A^{-1}\star B)^tA(A^{-1}\star B)^t,
\qquad t\in \R; 
\end{equation}
and call $A\widetilde{\natural}_tB$ for    $t\in [0,1]$ the {\em weighted near spectral geometric mean}.
The curve passes through \(A\) at
\(t=0\) and through
\[
D:=A\widetilde{\natural}_1B
=A^{-1}\star_1B
=A^{-1/2}B^{1/2}AB^{1/2}A^{-1/2}
\]
at \(t=1\). We prove the basic representation
\begin{equation}\label{intro:spectral-representation}
A\widetilde{\natural}_tB=A\natural_tD
\end{equation}
which shows that the new family is a spectral geometric curve whose distinguished
point at \(t=1\) is obtained from the original second input by a nonlinear
transformation. We further prove the log-majorization relation \(B\prec_{\log}D\), with \(D=B\) if and only if
\(AB=BA\), and show that the point at \(t=\frac12\) is isospectral to the
spectral geometric mean of the original inputs:
\[
A\widetilde{\natural}_{1/2}B=_\lambda A\natural_{1/2}B.
\]
The curve also has rigid order and interpolation properties. If
\(P_t=A\widetilde{\natural}_tB\) and \(M=A^{-1}\star B\), then
\begin{equation}\label{intro:key-identity}
P_t^{-1}\sharp P_s=M^{s-t}.
\end{equation}
This identity characterizes monotonic increase and decrease of the curve
with respect to the near order and yields quantitative two-sided estimates
in terms of the extremal eigenvalues of \(M\). We further prove that
commutativity of any two distinct points of either the near geometric curve
or the near spectral geometric curve characterizes commutativity of the
original inputs. In addition, the near spectral geometric curve satisfies
an affine interpolation identity with respect to \(\natural_t\), a
constant-speed identity for the metric-like function
\(d(X,Y)=2\|\log(X^{-1}\sharp Y)\|\), and a Lie--Trotter formula. Further
log-majorization relations are obtained in the final section.

Section~2 collects the preliminaries and the commutativity criterion for the
near geometric curve. Section~3 develops order comparisons for
\(A\star_tB\). Section~4 studies the near spectral geometric curve, and
Section~5 establishes further log-majorization relations.

\section{Preliminaries}
	
	Let \(\mathbb C^{n\times n}\) denote the vector space of all complex
	\(n\times n\) matrices. We write \(\mathbb H_n\) for the real vector space
	of \(n\times n\) Hermitian matrices and \(\mathbb P_n\) for the cone of
	positive definite matrices.    Throughout, \(\|X\|\) denotes the spectral norm of a matrix \(X\).

We use the following notations for means on $\P_n$ at  \(t=\frac12\): 
	\[
	A\sharp B:=A\sharp_{1/2}B,\quad
	A\natural B:=A\natural_{1/2}B,\quad
	A\star B:=A\star_{1/2}B,\quad
    A\widetilde{\natural} B=A\widetilde{\natural}_{1/2}B.
	\]
	 
  For \(A\in\mathbb P_n\), we arrange its eigenvalues in nonincreasing order: 
	\[
	\lambda_{\max}(A):=\lambda_1^\downarrow(A)
	\ge
	\lambda_2^\downarrow(A)
	\ge
	\cdots
	\ge
	\lambda_n^\downarrow(A)=:\lambda_{\min}(A)>0.
	\]  
	
	The following four comparison relations on \(\mathbb P_n\) will be used.
	
	\begin{enumerate}
		\item
		The \emph{L\"owner order} is defined by
		\[
		A\le B
		\quad\Longleftrightarrow\quad
		B-A\ge0,
		\]
		where \(C\ge0\) means that \(C\) is positive semidefinite. 
		
		\item
		The \emph{near order}, introduced by Dumitru and Franco
		\cite{DUMITRU2023}, is defined by
		\[
		A\preceq B
		\quad\Longleftrightarrow\quad
		A^{-1}\sharp B\ge I.
		\]
		
		\item
		The \emph{eigenvalue entrywise order}, introduced by Gan and Huang
		\cite{GAN2024}, is defined by
		\[
		A\le_\lambda B
		\quad\Longleftrightarrow\quad
		\lambda_j^\downarrow(A)
		\le
		\lambda_j^\downarrow(B),
		\qquad j=1,\ldots,n.
		\]
		We write \(A=_\lambda B\) if
		\[
		\lambda_j^\downarrow(A)
		=
		\lambda_j^\downarrow(B),
		\qquad j=1,\ldots,n.
		\]
		
		\item
		We say that \(A\) is \emph{weakly log-majorized} by \(B\), and write
		\[
		A\prec_{w\log}B,
		\]
		if
		\[
		\prod_{j=1}^k\lambda_j^\downarrow(A)
		\le
		\prod_{j=1}^k\lambda_j^\downarrow(B),
		\qquad k=1,\ldots,n.
		\]
		We say that \(A\) is \emph{log-majorized} by \(B\), and write
		\[
		A\prec_{\log}B,
		\]
		if
		\begin{align*}
			\prod_{j=1}^k\lambda_j^\downarrow(A)
			&\le
			\prod_{j=1}^k\lambda_j^\downarrow(B),
			\qquad k=1,\ldots,n-1,
			\\
			\det A 
			&=\det B.
		\end{align*} 
		Equivalently, \(A\prec_{\log}B\) means
		\(A\prec_{w\log}B\) together with equality for \(k=n\).
	\end{enumerate}
	
	These relations satisfy the implication chain
	\begin{equation}\label{eq:order-hierarchy}
		A\le B
		\quad\Longrightarrow\quad
		A\preceq B
		\quad\Longrightarrow\quad
		A\le_\lambda B
		\quad\Longrightarrow\quad
		A\prec_{w\log}B.
	\end{equation}
	Indeed, if \(A\le B\), then the monotonicity of the geometric mean   yields
	\[
	I=A^{-1}\sharp A
	\le
	A^{-1}\sharp B,
	\]
	and hence \(A\preceq B\). The implication
	\(A\preceq B\Rightarrow A\le_\lambda B\) was proved in
	\cite{GAN2024}, while the last implication is immediate.
	
	We use exterior powers to derive log-majorization from
	largest-eigenvalue inequalities. For \(1\le k\le n\), let
	\(\Lambda^k(A)\) denote the \(k\)th exterior power of \(A\). Then
	\begin{equation}\label{eq:exterior-largest}
		\lambda_{\max}\bigl(\Lambda^k(A)\bigr)
		=
		\prod_{j=1}^k\lambda_j^\downarrow(A).
	\end{equation}
	Thus a partial-product inequality becomes a largest-eigenvalue inequality
	on an exterior power.
	
	\begin{lemma}[{\cite[Lemma 2.1]{JEONG2023}}]\label{lem:exterior-powers}
		Let \(X,Y\in\mathbb C^{n\times n}\), \(A,B\in\mathbb P_n\), and
		\(1\le k\le n\). Then:
		\begin{enumerate}
			\item
			$
			\Lambda^k(cI)=c^kI
			$
			for every scalar \(c\).
			
			\item
			$
			\Lambda^k(XY)
			=
			\Lambda^k(X)\Lambda^k(Y).
			$
			
			\item
			For every \(r\in\mathbb R\),
			$
			\Lambda^k(A^r)
			=
			\bigl(\Lambda^kA\bigr)^r.
			$
			
			\item
			If \(A\le B\), then
			$
			\Lambda^k(A)\le\Lambda^k(B).
			$
		\end{enumerate}
	\end{lemma}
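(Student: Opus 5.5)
The four items are standard facts of multilinear algebra, and I would derive them all from the functoriality of $\Lambda^k$. Realize $\Lambda^k(\mathbb C^n)$ with the orthonormal basis $e_{i_1}\wedge\cdots\wedge e_{i_k}$, $i_1<\cdots<i_k$, and define $\Lambda^k(X)(v_1\wedge\cdots\wedge v_k)=Xv_1\wedge\cdots\wedge Xv_k$.

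For item (2), both sides agree on decomposable tensors, since $XYv_1\wedge\cdots\wedge XYv_k=\Lambda^k(X)(Yv_1\wedge\cdots\wedge Yv_k)$. Decomposable tensors span the space, so the two operators coincide. Item (1) is immediate from multilinearity: $cv_1\wedge\cdots\wedge cv_k=c^k\,v_1\wedge\cdots\wedge v_k$.

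For item (3), I would first note two consequences of the inner product $\langle v_1\wedge\cdots\wedge v_k,w_1\wedge\cdots\wedge w_k\rangle=\det[\langle v_i,w_j\rangle]$:
\begin{itemize}
\item $\Lambda^k(X^*)=\Lambda^k(X)^*$;
\item $\Lambda^k(U)$ is unitary whenever $U$ is unitary.
\end{itemize}
Now write $A=U\,\mathrm{diag}(\lambda_1,\dots,\lambda_n)U^*$. By (2), $\Lambda^k(A)=\Lambda^k(U)\Lambda^k(\mathrm{diag}(\lambda))\Lambda^k(U)^*$. On the basis, $\Lambda^k(\mathrm{diag}(\lambda))$ acts diagonally with eigenvalues $\lambda_{i_1}\cdots\lambda_{i_k}$. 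Applying the same computation to $A^r=U\,\mathrm{diag}(\lambda^r)U^*$ gives $\Lambda^k(A^r)$ as the same unitary conjugate of the diagonal operator with entries $(\lambda_{i_1}\cdots\lambda_{i_k})^r$. That operator is exactly the $r$th power of $\Lambda^k(A)$ by the functional calculus. This also yields \eqref{eq:exterior-largest}.

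Item (4) is the only one requiring an idea. The operator $\Lambda^k$ is not linear, so $B-A\ge0$ does not transfer directly. I would reduce to the identity by congruence:
\begin{itemize}
\item $A\le B$ is equivalent to $C:=A^{-1/2}BA^{-1/2}\ge I$.
\item $C\ge I$ means every eigenvalue of $C$ is at least $1$.
\item By the eigenvalue description above, every eigenvalue of $\Lambda^k(C)$, being a product of $k$ such eigenvalues, is also at least $1$, so $\Lambda^k(C)\ge I$.
\item By (2) and (3), $\Lambda^k(C)=\Lambda^k(A)^{-1/2}\Lambda^k(B)\Lambda^k(A)^{-1/2}$.
\item Conjugating back by $\Lambda^k(A)^{1/2}$, which is legitimate by the adjoint property, gives $\Lambda^k(A)\le\Lambda^k(B)$.
\end{itemize}
This congruence reduction is the only non-formal step. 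Everything else is bookkeeping with the functorial properties.
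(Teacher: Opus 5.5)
Your proof is correct. The paper gives no argument for this lemma of its own: it imports it verbatim from \cite[Lemma 2.1]{JEONG2023}, so there is no internal proof to compare against, and what you have written is a valid self-contained derivation.

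Items (1)--(3) are the expected functorial bookkeeping. Your unitary diagonalization in (3) does double duty. It shows that $\Lambda^k(A)$ is positive definite with eigenvalues $\lambda_{i_1}\cdots\lambda_{i_k}$, which is exactly what both your proof of (4) and the identity \eqref{eq:exterior-largest} need.

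For (4), the congruence reduction to $C=A^{-1/2}BA^{-1/2}\ge I$ is sound because $A\in\mathbb P_n$ is invertible. The more common textbook route avoids invertibility altogether. Since tensor products of positive semidefinite matrices are positive semidefinite, each step in $A^{\otimes k}\le A^{\otimes(k-1)}\otimes B\le\cdots\le B^{\otimes k}$ has a difference of the form $A^{\otimes(j-1)}\otimes(B-A)\otimes B^{\otimes(k-j)}\ge0$. Compressing to the antisymmetric subspace, on which $X^{\otimes k}$ acts as $\Lambda^k(X)$, then gives $\Lambda^k(A)\le\Lambda^k(B)$. That version extends to positive semidefinite $A,B$. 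Yours has the merit of using nothing beyond the spectral description you already built in (3).
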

	
	The following characterization is used in comparisons involving the near
	order.
	
	\begin{lemma}[\cite{GAN2024}]\label{lem:near-order-congruence}
		Let \(A,P,Q\in\mathbb P_n\), with \(P\) and \(Q\) commuting. Then
		\[
		P\le Q
		\quad\Longleftrightarrow\quad
		PAP\preceq QAQ.
		\]
	\end{lemma}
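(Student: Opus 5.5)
The plan is to compute the geometric mean appearing in the definition of the near order explicitly, using the Riccati characterization of \(\sharp\). By definition,
\[
PAP\preceq QAQ
\quad\Longleftrightarrow\quad
(PAP)^{-1}\sharp (QAQ)\ge I .
\]
The goal is therefore to identify \((PAP)^{-1}\sharp(QAQ)\) in closed form. The expected answer is \(QP^{-1}\), independent of \(A\).

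I will use the standard fact that for \(X,Y\in\mathbb P_n\), the mean \(X\sharp Y\) is the unique positive definite solution \(Z\) of the Riccati equation
\[
ZX^{-1}Z=Y.
\]
This follows directly from \eqref{geometric mean}: with \(Z=X^{1/2}WX^{1/2}\), the equation becomes \(W^2=X^{-1/2}YX^{-1/2}\), and this has a unique positive solution \(W\).

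Take \(X=(PAP)^{-1}\), so \(X^{-1}=PAP\), and take \(Y=QAQ\). Set \(S=QP^{-1}\). Since \(P\) and \(Q\) are commuting positive definite matrices, they are simultaneously unitarily diagonalizable. Hence \(S\) is positive definite and commutes with both \(P\) and \(Q\); in particular \(SP=PS=Q\). Then
\[
S(PAP)S=(SP)A(PS)=QAQ,
\]
so \(S\) solves the Riccati equation. Uniqueness gives
\[
(PAP)^{-1}\sharp (QAQ)=QP^{-1}.
\]

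It remains to see that \(QP^{-1}\ge I\) if and only if \(P\le Q\). In a common orthonormal eigenbasis, \(P=\mathrm{diag}(p_i)\) and \(Q=\mathrm{diag}(q_i)\) with \(p_i,q_i>0\). Then \(QP^{-1}\ge I\) means \(q_i/p_i\ge1\) for every \(i\), which is exactly \(q_i\ge p_i\) for every \(i\), that is, \(P\le Q\). Combining the two displays proves both directions of the equivalence.

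The only step requiring care is the use of uniqueness of the positive solution of the Riccati equation. This is what lets us guess the commuting candidate \(S\) rather than compute \((PAP)^{-1}\sharp(QAQ)\) directly. I expect no further obstacle.
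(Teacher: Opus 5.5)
Your proof is correct and complete. The identity \((PAP)^{-1}\sharp(QAQ)=QP^{-1}\) follows from the Riccati characterization (Lemma~\ref{lem:riccati-geometric}), since \(QP^{-1}\) is positive definite and \((QP^{-1})(PAP)(QP^{-1})=QAQ\); this reduces the near order to the commuting comparison \(QP^{-1}\ge I\iff P\le Q\). The paper only cites this lemma from \cite{GAN2024} without proof, but your argument is the same Riccati device the paper uses elsewhere, e.g.\ in deriving \eqref{eq:near-spectral-key}.
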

	
	We also use the Riccati characterization of the geometric mean.
	
	\begin{lemma}[\cite{bhatia07}]\label{lem:riccati-geometric}
		Let \(A,Y\in\mathbb P_n\). Then the equation
		$
		A^{-1}\sharp X=Y
		$
		has the unique positive definite solution
		$
		X=YAY.
		$ 
	\end{lemma}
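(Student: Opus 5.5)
The statement is the Riccati lemma: for $A,Y\in\mathbb P_n$, the equation $A^{-1}\sharp X=Y$ has the unique positive definite solution $X=YAY$. The plan has two parts, verification and uniqueness, and both rest on the defining formula \eqref{geometric mean} with $t=\tfrac12$.

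\emph{Verification.} I take $X=YAY$ and compute
\[
A^{-1}\sharp X=A^{-1/2}\bigl(A^{1/2}YAYA^{1/2}\bigr)^{1/2}A^{-1/2}.
\]
The inner matrix factors as $(A^{1/2}YA^{1/2})^2$. Since $A^{1/2}YA^{1/2}$ is positive definite, the unique positive square root of this square is $A^{1/2}YA^{1/2}$ itself. Substituting gives $A^{-1}\sharp X=A^{-1/2}A^{1/2}YA^{1/2}A^{-1/2}=Y$. Positivity of $X$ is clear, because $X=YAY$ is congruent to $A$.

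\emph{Uniqueness.} I assume $A^{-1}\sharp X=Y$ for some $X\in\mathbb P_n$. By the definition,
\[
A^{-1/2}\bigl(A^{1/2}XA^{1/2}\bigr)^{1/2}A^{-1/2}=Y,
\]
so $(A^{1/2}XA^{1/2})^{1/2}=A^{1/2}YA^{1/2}$. Squaring both sides gives $A^{1/2}XA^{1/2}=A^{1/2}YAYA^{1/2}$. Cancelling the invertible factors $A^{1/2}$ on each side then gives $X=YAY$.

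The only delicate point is the uniqueness of the positive semidefinite square root. It justifies both taking the root in the verification step and recovering the inner matrix by squaring in the uniqueness step. I do not expect any other obstacle, since every step is an algebraic identity.
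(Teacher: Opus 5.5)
Your proof is correct. The paper gives no argument of its own and only cites Bhatia's Riccati lemma, whose standard proof is the same congruence-by-$A^{1/2}$ and uniqueness-of-positive-square-root computation you use, here arranged to solve for $X$ rather than for the mean. One small precision: only your verification step needs uniqueness of the positive square root, while your uniqueness step uses merely $(M^{1/2})^2=M$.
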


The weighted geometric mean \eqref{geometric mean}, the spectral geometric mean \eqref{spectral geometric mean}, and the near geometric mean \eqref{Q0} on $\P_n$ admit natural
	extensions to arbitrary real parameter $t$; see
	\cite{Kim2020a, GAN2024, Franco2025}. We also extend the near spectral geometric mean $A\widetilde{\natural}_t B$ to all $t\in\R$.
    
Many properties of these means continue to hold for the extended range of parameters. For example, the following parameter-reversal identities remain valid.
	
	\begin{lemma}\label{lem:parameter-reversal}
		For $A,B\in\P_n$ and $t\in\R$,  
		\begin{align}
			A\sharp_tB &= B\sharp_{1-t}A,\label{eq:parameter-reversal}\\
			A\natural_tB &= B\natural_{1-t}A.\label{eq:natural-reversal}
		\end{align}
	\end{lemma}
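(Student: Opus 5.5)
For \eqref{eq:parameter-reversal} I will argue directly from the definition \eqref{geometric mean}, with real powers of positive definite matrices defined by functional calculus. The key observation is that for invertible \(X\) and \(P\in\mathbb P_n\), the matrix \(X^{-1}P X\) is diagonalizable with positive spectrum, and \((X^{-1}PX)^s=X^{-1}P^sX\) for all real \(s\); here the power is defined through the similarity, which agrees with functional calculus whenever the matrix is positive definite.

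Write
\[
A\sharp_tB=A^{1/2}\bigl(A^{-1/2}BA^{-1/2}\bigr)^tA^{1/2}=A\,(A^{-1}B)^t,
\]
using \(A^{-1/2}(A^{-1/2}BA^{-1/2})^tA^{1/2}=(A^{-1}B)^t\), which follows from the observation with \(X=A^{1/2}\). Similarly,
\[
B\sharp_{1-t}A=B\,(B^{-1}A)^{1-t}.
\]
Now \(B^{-1}A=(A^{-1}B)^{-1}\), so
\[
B\,(B^{-1}A)^{1-t}=B\,(A^{-1}B)^{t-1}=B\,(A^{-1}B)^{-1}(A^{-1}B)^{t}=A\,(A^{-1}B)^t .
\]
This proves \eqref{eq:parameter-reversal}. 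The only delicate point is justifying the power manipulations for the non-Hermitian matrix \(A^{-1}B\). It is similar to \(A^{-1/2}BA^{-1/2}\in\mathbb P_n\), so its real powers form a one-parameter group \(s\mapsto (A^{-1}B)^s\). Inverses and exponent addition therefore behave as for positive scalars, and \(A^{-1}B\) commutes with its own powers.

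For \eqref{eq:natural-reversal}, set \(G=A^{-1}\sharp B\). Symmetry of \(\sharp\) at \(t=\tfrac12\) (the case \(t=1/2\) of \eqref{eq:parameter-reversal}) together with the inversion identity \((X\sharp Y)^{-1}=X^{-1}\sharp Y^{-1}\) gives
\[
B^{-1}\sharp A=(A^{-1}\sharp B)^{-1}=G^{-1}.
\]
By Lemma~\ref{lem:riccati-geometric}, \(G\) is the unique solution of \(GAG=B\). Then
\[
B\natural_{1-t}A=G^{-(1-t)}BG^{-(1-t)}=G^{t-1}(GAG)G^{t-1}=G^{t}AG^{t}=A\natural_tB,
\]
since all powers of \(G\) commute.

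The main obstacle is only bookkeeping: verifying the inversion identity for \(\sharp\), which follows from the formula \(A\sharp B=A(A^{-1}B)^{1/2}\) above, and the Riccati identity \(GAG=B\). Both are standard, and with them the rest is routine algebra valid for every \(t\in\R\).
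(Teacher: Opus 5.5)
Your proof is correct and is essentially the paper's argument. Part (2) matches the paper's proof: the Riccati identity together with $B^{-1}\sharp A=(A^{-1}\sharp B)^{-1}$, which the paper uses without stating it. Part (1) rests on the same similarity covariance of real powers, organized through $A\sharp_tB=A(A^{-1}B)^t$ rather than the paper's single conjugation by $A^{-1/2}B^{1/2}$, which only ever takes powers of positive definite matrices.

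The one point you should state explicitly is that $(A^{-1}B)^s$ does not depend on the chosen similarity. For example, $X^{-1}P^sX=p(X^{-1}PX)$ for a polynomial $p$ interpolating $x^s$ on the common spectrum. This is what makes your $B^{1/2}$-based expression for $(B^{-1}A)^{1-t}$ agree with $(A^{-1}B)^{t-1}$.
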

	
	\begin{proof}
Using the identity $YX^{1-t}Y^{-1}=(YXY^{-1})^{1-t}$, we get
		\begin{align*}
			B\sharp_{1-t}A
			&=
			B^{1/2}
			\left(B^{-1/2}AB^{-1/2}\right)^{1-t}
			B^{1/2} 
\\ &= A^{1/2}\left[(A^{-1/2}B^{1/2})\left(B^{-1/2}AB^{-1/2}\right)^{1-t} (A^{-1/2}B^{1/2})^{-1}\right] 
			(A^{-1/2}B^{1/2})  B^{1/2} 
\\ &= A^{1/2}\left[ (A^{-1/2}B^{1/2})B^{-1/2}AB^{-1/2} (A^{-1/2}B^{1/2})^{-1}\right] ^{1-t}
			A^{-1/2}B  
\\ &= A^{1/2}\left[  A^{1/2} B^{-1} A^{1/2}\right] ^{1-t}
			A^{-1/2}B  
\\ &= A^{1/2}\left[  A^{-1/2} B A^{-1/2}\right] ^{t} ( A^{1/2} B^{-1} A^{1/2})
			A^{-1/2}B  
\\ &=A\sharp_tB. 
		\end{align*}
For the spectral geometric mean, using the Riccati characterization $(B^{-1}\sharp A)B(B^{-1}\sharp A) = A$, we have
\begin{align*}
    		B\natural_{1-t} A
		&=
		(B^{-1}\sharp A)^{1-t}B(B^{-1}\sharp A)^{1-t}
\\
		&=
		(A^{-1}\sharp B)^t (B^{-1}\sharp A)B(B^{-1}\sharp A) (A^{-1}\sharp B)^t
\\
		&=
		(A^{-1}\sharp B)^t A (A^{-1}\sharp B)^t 
        \\ &= A\natural_{t} B.  \qedhere 
\end{align*}
	\end{proof}

Let us turn to the weighted near geometric mean 
	\[
	A\star_tB
	=
	A^{1/2}
	(B^{1/2}A^{-1}B^{1/2})^t
	A^{1/2}.
	\]	
	The identities below are proved in \cite{Franco2025} on the usual
	parameter interval and extend directly to every real \(t\) by functional
	calculus.

	\begin{theorem}\label{thm:near-geometric-basic}
		Let \(A,B\in\mathbb P_n\) and \(t\in\mathbb R\). Then:
		\begin{enumerate}
			\item
			If \(AB=BA\), then
			$
			A\star_tB=A^{1-t}B^t.
			$
			
			\item
			For all \(a,b>0\),
			$
			(aA)\star_t(bB)
			=
			a^{1-t}b^t(A\star_tB).
			$
			
			\item
			For every unitary matrix \(U\),
			\[
			(UAU^*)\star_t(UBU^*)
			=
			U(A\star_tB)U^*.
			\]
			
			\item
			$
			A^{-1}\star_tB^{-1}
			=
			(A\star_tB)^{-1}.
			$
			
			\item
			$
			\det(A\star_tB)
			=
			(\det A)^{1-t}(\det B)^t.
			$
			
			\item
			The Lie--Trotter type formula holds:
			\[
			\lim_{r\to0}
			\bigl(A^r\star_tB^r\bigr)^{1/r}
			=
			\exp\bigl((1-t)\log A+t\log B\bigr).
			\]
		\end{enumerate}
	\end{theorem}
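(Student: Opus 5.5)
We verify the six items one at a time, directly from the defining formula $A\star_tB=A^{1/2}(B^{1/2}A^{-1}B^{1/2})^tA^{1/2}$. Throughout, real powers of a positive definite matrix are taken by functional calculus, so every step below is valid for all $t\in\R$.

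Items (1)--(3) and (5) are direct substitutions.

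(1) If $AB=BA$, then $A$ and $B$ are simultaneously unitarily diagonalizable. Hence $B^{1/2}A^{-1}B^{1/2}=A^{-1}B$, and this matrix commutes with $A^{1/2}$. Therefore
\[
A\star_tB=A^{1/2}A^{-t}B^tA^{1/2}=A^{1-t}B^t.
\]

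(2) The inner matrix scales as $(bB)^{1/2}(aA)^{-1}(bB)^{1/2}=(b/a)\,B^{1/2}A^{-1}B^{1/2}$. Its $t$-th power therefore picks up the factor $(b/a)^t$, and the outer factors $(aA)^{1/2}$ contribute $a$. Together this gives the factor $a^{1-t}b^t$.

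(3) Use $f(UXU^*)=Uf(X)U^*$ for $f(x)=x^{1/2}$, $x^{-1}$ and $x^t$.

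(5) Use multiplicativity of the determinant:
\[
\det(A\star_tB)=\det A\,\bigl(\det B\,\det A^{-1}\bigr)^t=(\det A)^{1-t}(\det B)^t.
\]

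(4) This is the only item needing a genuine identity. Put $X=B^{-1/2}AB^{-1/2}$. Then $B^{-1/2}AB^{-1/2}=X$ is exactly the inner matrix of $A^{-1}\star_tB^{-1}$, so
\[
A^{-1}\star_tB^{-1}=A^{-1/2}X^tA^{-1/2}.
\]
On the other side, $(A\star_tB)^{-1}=A^{-1/2}(B^{1/2}A^{-1}B^{1/2})^{-t}A^{-1/2}$. We have $B^{1/2}A^{-1}B^{1/2}=X^{-1}$, so $(B^{1/2}A^{-1}B^{1/2})^{-t}=X^t$. The two expressions therefore agree.

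(6) This is the main obstacle, since it requires an asymptotic expansion rather than an algebraic identity. Write $\log A=H$ and $\log B=K$. For small $r$ we have
\[
B^{r/2}A^{-r}B^{r/2}=\exp\bigl(r(K-H)+O(r^2)\bigr),
\]
and taking the $t$-th power gives $\exp\bigl(rt(K-H)+O(r^2)\bigr)$. Multiplying by $A^{r/2}$ on both sides yields
\[
A^r\star_tB^r=\exp\bigl(r((1-t)H+tK)+O(r^2)\bigr).
\]
This is a positive definite matrix with logarithm $rL+O(r^2)$, where $L=(1-t)H+tK$. Raising to the power $1/r$ gives $\exp\bigl(L+O(r)\bigr)$, which tends to $\exp L$.

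To make this rigorous, we would justify the expansions through the analyticity of $\log$ near $I$ and the Baker--Campbell--Hausdorff formula, used only to first order. Alternatively, the conclusion follows directly from the representation
\[
A^r\star_tB^r=A^r\sharp_t\bigl(A^{r/2}B^{r/2}A^{-r}B^{r/2}A^{r/2}\bigr)
\]
from \eqref{eq:near-geometric-to-geometric}, combined with the known Lie--Trotter formula for $\sharp_t$ extended to real $t$.

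Negative $r$ needs no separate treatment: by item (4) and inversion, it reduces to the case $r>0$.
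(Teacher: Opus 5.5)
Your proof is correct and is essentially what the paper intends: the paper gives no separate proof, only citing Franco, Gan, and Kim for the usual range of $t$ and noting that the same functional-calculus identities hold for every real $t$; your item-by-item computations verify exactly this, and your first-order expansion for (6) is the same technique the paper uses for Theorem~\ref{thm:near-spectral-LT}. One caution: your ``alternative'' route for (6) is not a direct application of the standard Lie--Trotter formula for $\sharp_t$, because $A^{r/2}B^{r/2}A^{-r}B^{r/2}A^{r/2}$ is not of the form $C^r$ for a fixed $C$, so it would need a version of that formula for curves $C_r$ with $\frac1r\log C_r\to\log B$; your main expansion argument needs none of this, and since it is two-sided in $r$, the reduction of $r<0$ via (4) is also unnecessary.
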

	
	 Franco, Gan, and Kim \cite{Franco2025} showed that the near geometric curve is a weighted geometric curve with a transformed
	second input. Indeed, 
	\begin{equation}\label{eq:star-to-sharp}
		A\star_tB
		=
		A\sharp_tC,
		\qquad
		C:=A\star_1B
		=
		A^{1/2}B^{1/2}A^{-1}B^{1/2}A^{1/2},
	\end{equation}
	where $\det C=\det B$. 
	
	Conversely, given \(A,C\in\mathbb P_n\), a relation
	$ 
	C=A\star_1B
	$
	is equivalent to
	\[
	A^{-1/2}CA^{-1/2}
	=
	B^{1/2}A^{-1}B^{1/2}.
	\]
	By the Riccati characterization of the geometric mean,
	\[
	B^{1/2}
	=
	A\sharp\bigl(A^{-1/2}CA^{-1/2}\bigr),
	\]
	and hence
	\[
	B
	=
	\left[
	A\sharp\bigl(A^{-1/2}CA^{-1/2}\bigr)
	\right]^2.
	\]
	Consequently, for arbitrary \(A,C\in\mathbb P_n\),
	\begin{equation}\label{eq:sharp-to-star}
		A\sharp_tC
		=
		A\star_t
		\left[
		A\sharp\bigl(A^{-1/2}CA^{-1/2}\bigr)
		\right]^2.
	\end{equation}
	Thus the near geometric curve is obtained from the weighted geometric curve by a nonlinear transformation of the second input.
	
	The following result shows that commutativity at two distinct parameter values
	characterizes commutativity of the inputs.
	
	\begin{theorem}\label{thm:near-geometry-commutativity}
		Let \(A,B\in\mathbb P_n\). The following are equivalent:
		\begin{enumerate}
			\item
			$
			AB=BA.
			$
			
			\item
			There exist distinct \(t,s\in\mathbb R\) such that
			\[
			(A\star_tB)(A\star_sB)
			=
			(A\star_sB)(A\star_tB).
			\]
			
			\item
			The family
			$
			\{A\star_tB:t\in\mathbb R\}
			$
			consists of mutually commuting matrices.
		\end{enumerate}
	\end{theorem}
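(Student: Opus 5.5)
The plan is to prove (1)$\Rightarrow$(3)$\Rightarrow$(2)$\Rightarrow$(1). If $AB=BA$, Theorem~\ref{thm:near-geometric-basic}(1) gives $A\star_tB=A^{1-t}B^t$ for every $t\in\R$. All of these matrices are functions of the commuting pair $A,B$, so they commute with one another. This proves (3), and (3)$\Rightarrow$(2) is trivial.

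For (2)$\Rightarrow$(1), write $X:=A^{-1/2}(A\star_1 B)A^{-1/2}=B^{1/2}A^{-1}B^{1/2}\in\P_n$, so that $A\star_tB=A^{1/2}X^tA^{1/2}$. The hypothesis reads
\[
A^{1/2}X^tAX^sA^{1/2}=A^{1/2}X^sAX^tA^{1/2},
\]
which after cancelling $A^{1/2}$ becomes $X^tAX^s=X^sAX^t$. Multiplying on the left by $X^{-s}$ and on the right by $X^{-t}$ gives $X^{t-s}A=AX^{t-s}$. Set $r:=t-s\neq0$. Since $X\in\P_n$, $X$ is the function $y\mapsto y^{1/r}$ of $X^r$ in the functional calculus, so $X$ is a polynomial in $X^r$. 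Hence $A$ commutes with $X$, and therefore with $X^{-1}=B^{-1/2}AB^{-1/2}$.

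The main obstacle is to pass from $A$ commuting with $B^{1/2}A^{-1}B^{1/2}$ to $A$ commuting with $B$. Let $Y:=A^{-1/2}B^{1/2}$. Then
\[
YY^*=A^{-1/2}BA^{-1/2},\qquad Y^*Y=B^{1/2}A^{-1}B^{1/2}=X.
\]
Since $AX=XA$, we get $XA^{-1}=A^{-1}X$, and this can be rewritten as
\[
B^{1/2}A^{-1}B^{1/2}A^{-1}=A^{-1}B^{1/2}A^{-1}B^{1/2}.
\]
The aim is to convert this into a statement about $B^{1/2}$ and $A^{-1}$ alone.

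The cleanest route is to work in an eigenbasis. Because $A$ and $X$ commute, we may choose a unitary $U$ diagonalizing both, and by Theorem~\ref{thm:near-geometric-basic}(3) we may assume $A=\mathrm{diag}(a_i)$ with the eigenspaces of $A$ being invariant for $X$. Write $P:=B^{1/2}$. Then $X=PA^{-1}P$ commutes with $A$, so $X$ is block diagonal with respect to the eigenspaces of $A$. Since $A$ is a scalar on each such block, this is equivalent to saying that $A^{-1/2}XA^{-1/2}$ commutes with $A$.

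Now observe that
\[
A^{-1/2}XA^{-1/2}=(A^{-1/2}PA^{-1/2})^2 .
\]
Put $Q:=A^{-1/2}PA^{-1/2}\in\P_n$. Then $Q^2$ commutes with $A$, and since $Q$ is the positive square root of $Q^2$, it is a polynomial in $Q^2$; hence $Q$ commutes with $A$. It follows that $P=A^{1/2}QA^{1/2}$ commutes with $A$, and therefore so does $B=P^2$.

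This square-root step, which uses that a positive definite matrix commuting with $A$ has a positive square root that also commutes with $A$, is the crux of the argument. Every other step is formal cancellation. It would also be worth double-checking that the identity $A^{-1/2}XA^{-1/2}=Q^2$ holds exactly: expanding, $Q^2=A^{-1/2}PA^{-1}PA^{-1/2}=A^{-1/2}XA^{-1/2}$, which confirms it.
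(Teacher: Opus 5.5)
Your proof is correct and is essentially the paper's argument: the reduction to $X^{t-s}A=AX^{t-s}$ and then to $AX=XA$ is identical. Your square-root step with $Q=A^{-1/2}B^{1/2}A^{-1/2}$ is just an inline proof of the Riccati uniqueness the paper cites; it shows $B^{1/2}=A\sharp X$, whereas the paper instead exhibits $(AX)^{1/2}$ as the unique positive solution of $ZA^{-1}Z=X$. The detours through $Y=A^{-1/2}B^{1/2}$ and the simultaneous eigenbasis are unnecessary, since $AX=XA$ already makes $A^{-1/2}XA^{-1/2}=A^{-1}X$ commute with $A$.
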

	
	\begin{proof}
		Clearly, \((3)\Rightarrow(2)\). If \(AB=BA\), then
		$~
		A\star_tB=A^{1-t}B^t,
		~~ t\in\mathbb R,
		$
		so \((1)\Rightarrow(3)\).
		It remains to prove \((2)\Rightarrow(1)\).
        Set
		$
		C:=B^{1/2}A^{-1}B^{1/2}.
		$
        
        If \(A\star_tB\) and \(A\star_sB\) commute for \(t\neq s\), then
		\[
		C^tAC^s=C^sAC^t
		~\text{
		and hence}~
		C^{t-s}A=AC^{t-s}.
		\]
		Since \(t-s\neq0\) and \(C>0\), functional calculus gives \(AC=CA\).
		
		Now \(B^{1/2}\) is the unique positive definite solution of
		\[
		XA^{-1}X=C.
		\]
		Since \(A\) and \(C\) commute, \((AC)^{1/2}\) is also a positive definite
		solution. Therefore,
		\[
		B^{1/2}=(AC)^{1/2},
		\]
		which commutes with \(A\). Hence \(AB=BA\).
	\end{proof}
	
Finally, we recall  an order characterization of the near geometric curve.
	
	\begin{theorem}[\cite{Franco2025}]\label{thm:near-geometric-order}
		Let \(A,B\in\mathbb P_n\). Then \(A\le B\) if and only if either of the
		following equivalent conditions holds:
		\begin{enumerate}
			\item
			$
			A\star_tB\ge A
			$
			for some, equivalently for every, \(t>0\).
			
			\item
			$
			A\star_tB\le A
			$
			for some, equivalently for every, \(t<0\).
		\end{enumerate}
	\end{theorem}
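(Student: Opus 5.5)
The plan is to reduce everything to a statement about the positive definite matrix
\[
C:=B^{1/2}A^{-1}B^{1/2},
\qquad\text{so that}\qquad
A^{-1/2}(A\star_tB)A^{-1/2}=A^{-1/2}\,A^{1/2}C^tA^{1/2}\,A^{-1/2}\cdot(\text{conjugation}).
\]
Concretely, $A\star_tB=A^{1/2}C^tA^{1/2}$. Hence $A\star_tB\ge A$ if and only if $C^t\ge I$. For $t>0$ this holds exactly when $C\ge I$, and for $t<0$ exactly when $C\le I$. In both cases the condition is independent of the particular $t$, because $C^t\ge I$ for $t>0$ iff all eigenvalues of $C$ are $\ge1$. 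Likewise $A\star_tB\le A$ for $t<0$ iff $C^t\le I$ iff $C\ge I$. This already gives the ``for some, equivalently for every'' clauses and the equivalence of (1) and (2).

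It then remains to show that $C\ge I$ is equivalent to $A\le B$. I will write
\[
C\ge I
\iff B^{1/2}A^{-1}B^{1/2}\ge I
\iff A^{-1}\ge B^{-1}
\iff A\le B.
\]
The middle step is congruence by $B^{-1/2}$, which preserves the L\"owner order. The last step uses the order-reversing property of inversion on $\mathbb P_n$. All steps are reversible, so the chain gives the required equivalence.

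I expect no serious obstacle. The only points needing care are the sign of $t$ and the spectral argument that $C^t\ge I$ is equivalent to $C\ge I$ for $t>0$, and to $C\le I$ for $t<0$; both follow from the functional calculus, since $x\mapsto x^t$ maps $[1,\infty)$ into $[1,\infty)$ when $t>0$. Note also that $A\star_0B=A$, so $t=0$ is correctly excluded from the statement.
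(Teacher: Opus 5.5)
Your proof is correct and complete. The paper gives no proof of this statement; it quotes it from \cite{Franco2025}. Your argument is the natural direct one, and it uses the same two ingredients the paper uses elsewhere.

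The first is the normalization $A^{-1/2}(A\star_tB)A^{-1/2}=(B^{1/2}A^{-1}B^{1/2})^t$, which appears in the proof of Theorem~\ref{thm:normalized-eigenvalue-comparison}. The second is the spectral fact that $C^t\ge I$ is equivalent to $C\ge I$ for $t>0$ and to $C\le I$ for $t<0$, as in the proof of Theorem~\ref{thm:near-order-increasing}. Your final step, $B^{1/2}A^{-1}B^{1/2}\ge I\iff A^{-1}\ge B^{-1}\iff A\le B$, is also right.

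The only fix is cosmetic. Your first display contains a stray factor ``$\cdot(\text{conjugation})$''; it should simply read $A^{-1/2}(A\star_tB)A^{-1/2}=C^t$.
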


	\section{Further properties of the near geometric mean}
	
	The near geometric mean does not generally inherit the L\"owner-order
	monotonicity of the weighted geometric mean. A question in
	\cite{Franco2025} asks to  compare $A\star_tB$ with the weighted
	arithmetic mean
	\[
	A\nabla_tB:=(1-t)A+tB.
	\]
	We first prove an result on unconditional normalized eigenvalue comparison, then give
	a spectral sufficient condition for the corresponding L\"owner inequality.
	A counterexample shows that weak log-majorization fails without additional
	hypotheses. We conclude the section with a conditional log-majorization
	hierarchy between several  matrix means.

	\subsection{L\"owner-order and eigenvalue comparisons}
	
	We begin with the normalized eigenvalue inequality.
	
	\begin{theorem}\label{thm:normalized-eigenvalue-comparison}
		Let \(A,B\in\mathbb P_n\) and \(0\le t\le1\). Then
		\[
		A^{-1/2}(A\star_tB)A^{-1/2}
		\le_\lambda
		(1-t)I+tA^{-1/2}BA^{-1/2}.
		\]
		Equivalently,
		\[
		A^{-1/2}(A\star_tB)A^{-1/2}
		\le_\lambda
		A^{-1/2}\bigl((1-t)A+tB\bigr)A^{-1/2}.
		\]
	\end{theorem}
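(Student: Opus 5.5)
The plan is to reduce both sides to functions of a single positive definite matrix, after which the comparison becomes a scalar inequality applied eigenvalue by eigenvalue. By definition \eqref{Q0},
\[
A^{-1/2}(A\star_tB)A^{-1/2}=(B^{1/2}A^{-1}B^{1/2})^t .
\]
Set \(X:=A^{-1/2}BA^{-1/2}\in\mathbb P_n\) and \(C_0:=B^{1/2}A^{-1}B^{1/2}\in\mathbb P_n\). The first step is to show that \(C_0\) and \(X\) have the same eigenvalues. Both are similar to \(A^{-1}B\):
\[
C_0=B^{-1/2}(A^{-1}B)B^{1/2},\qquad X=A^{1/2}(A^{-1}B)A^{-1/2}.
\]
Since both are positive definite, \(C_0=_\lambda X\). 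The map \(x\mapsto x^t\) with \(t\ge0\) is nondecreasing on \((0,\infty)\), so the spectral mapping theorem gives \(\lambda_j^\downarrow(C_0^t)=\bigl(\lambda_j^\downarrow(X)\bigr)^t\) for every \(j\). Hence the left side of the statement is isospectral to \(X^t\).

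The second step treats the right side. We have \((1-t)I+tX=f(X)\) with \(f(x)=(1-t)+tx\), which is nondecreasing for \(t\ge0\). Therefore \(\lambda_j^\downarrow\bigl((1-t)I+tX\bigr)=(1-t)+t\lambda_j^\downarrow(X)\). Write \(x_j=\lambda_j^\downarrow(X)\). The claim then reduces to the scalar weighted AM--GM (Bernoulli) inequality
\[
x_j^t=1^{1-t}x_j^{t}\le(1-t)\cdot1+t\,x_j,\qquad x_j>0,\ 0\le t\le1,
\]
which follows from concavity of \(\log\) (equivalently, of \(x\mapsto x^t\) lying below its tangent line at \(x=1\)). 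This gives \(A^{-1/2}(A\star_tB)A^{-1/2}\le_\lambda(1-t)I+tX\).

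The equivalent form in the statement is immediate, because \(A^{-1/2}\bigl((1-t)A+tB\bigr)A^{-1/2}=(1-t)I+tX\). The only point needing care is the first step, where one must match the eigenvalues of the two sides in the same nonincreasing order. This is handled by noting that both sides are nondecreasing functions of the same spectrum \(\{x_j\}\), so the orderings correspond index by index. I do not expect any genuine obstacle beyond this.
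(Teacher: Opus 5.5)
Your proof is correct and follows the paper's route: isospectrality of $B^{1/2}A^{-1}B^{1/2}$ and $A^{-1/2}BA^{-1/2}$, then weighted AM--GM. The paper states the latter as the L\"owner inequality $X^t\le(1-t)I+tX$ and passes to $\le_\lambda$, which amounts to your eigenvalue-by-eigenvalue scalar comparison.

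One small slip: the similarity should read $C_0=B^{1/2}(A^{-1}B)B^{-1/2}$, since $B^{-1/2}(A^{-1}B)B^{1/2}=B^{-1/2}A^{-1}B^{3/2}$. The conclusion $C_0=_\lambda X$ is unaffected.
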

	
	\begin{proof}
		By definition,
		\[
		A^{-1/2}(A\star_tB)A^{-1/2}
		=
		\bigl(B^{1/2}A^{-1}B^{1/2}\bigr)^t.
		\]
		The matrices
		$B^{1/2}A^{-1}B^{1/2}$
		and
		$A^{-1/2}BA^{-1/2}$
		have the same eigenvalues. Since both are positive definite, the same is true
		for their real powers. Hence
		\begin{equation}\label{eq:normalized-isospectral}
			\bigl(B^{1/2}A^{-1}B^{1/2}\bigr)^t
			=_\lambda
			\bigl(A^{-1/2}BA^{-1/2}\bigr)^t.
		\end{equation}
		For \(0\le t\le1\), the weighted arithmetic--geometric mean inequality gives
		\[
		\bigl(A^{-1/2}BA^{-1/2}\bigr)^t
		\le
		(1-t)I+tA^{-1/2}BA^{-1/2}.
		\]
		Therefore,
		\[
		\bigl(A^{-1/2}BA^{-1/2}\bigr)^t
		\le_\lambda
		(1-t)I+tA^{-1/2}BA^{-1/2}.
		\]
		Combining this with \eqref{eq:normalized-isospectral} proves the result.
	\end{proof}
	
	The next result gives a sufficient spectral condition for a comparison in
	the L\"owner order.
	
	\begin{theorem}\label{thm:near-vs-arithmetic}
		Let \(A,B\in\mathbb P_n\) and \(0\le t<1\). Set
		\[
		\alpha
		=
		\lambda_{\min}\!\left(A^{-1/2}BA^{-1/2}\right),
		\qquad
		\beta
		=
		\lambda_{\max}\!\left(A^{-1/2}BA^{-1/2}\right).
		\]
		If
		\begin{equation}\label{eq:spectral-condition}
			\beta^t\le (1-t)+t\alpha,
		\end{equation}
		then
		\[
		A\star_tB\le(1-t)A+tB.
		\]
	\end{theorem}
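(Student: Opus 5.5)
Conjugating by \(A^{-1/2}\) preserves the L\"owner order, so the claimed inequality is equivalent to
\[
\bigl(B^{1/2}A^{-1}B^{1/2}\bigr)^t\le (1-t)I+tX,\qquad X:=A^{-1/2}BA^{-1/2}.
\]
The left side is not a function of \(X\): it has the same spectrum as \(X^t\) (this is \eqref{eq:normalized-isospectral}), but its eigenvectors differ. So I will not compare the two sides eigenvector by eigenvector. Instead I will sandwich both sides between scalar multiples of \(I\).

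For the upper end of the left side, the matrix \(K:=B^{1/2}A^{-1}B^{1/2}\) has spectrum equal to that of \(X\), namely contained in \([\alpha,\beta]\). Since \(t\ge0\), the spectrum of \(K^t\) lies in \([\alpha^t,\beta^t]\), and in particular
\[
K^t\le \beta^t I .
\]
For the lower end of the right side, \(X\ge\alpha I\) and \(t\ge0\) give
\[
(1-t)I+tX\ge \bigl((1-t)+t\alpha\bigr)I .
\]

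Hypothesis \eqref{eq:spectral-condition} now links the two scalar bounds:
\[
K^t\le\beta^tI\le\bigl((1-t)+t\alpha\bigr)I\le(1-t)I+tX .
\]
Multiplying on both sides by \(A^{1/2}\) returns \(A\star_tB\le(1-t)A+tB\). The case \(t=0\) is trivial, since then both sides equal \(A\).

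There is no serious obstacle. The only point needing care is to avoid trying to compare \(K^t\) with \(X^t\) in the L\"owner order: they are merely isospectral, not comparable. The argument therefore passes only through scalar bounds on the spectra, which is why the sufficient condition is phrased with the extremal eigenvalues \(\alpha\) and \(\beta\). The role of \(t\ge0\) is to ensure that \(\lambda\mapsto\lambda^t\) is nondecreasing, so that \(\lambda_{\max}(K^t)=\beta^t\).
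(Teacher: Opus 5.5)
Your proof is correct and is essentially the paper's own argument: congruence by $A^{1/2}$ reduces the claim to comparing $(B^{1/2}A^{-1}B^{1/2})^t$ with $(1-t)I+tA^{-1/2}BA^{-1/2}$. You then use the isospectrality of $K$ and $X$ to chain the scalar bounds $K^t\le\beta^tI\le((1-t)+t\alpha)I\le(1-t)I+tX$, exactly as the paper does.
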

	
	\begin{proof}
		Since
		$
		B^{1/2}A^{-1}B^{1/2}
		$
		has the same eigenvalues as \(A^{-1/2}BA^{-1/2}\),
		\[
		\bigl(B^{1/2}A^{-1}B^{1/2}\bigr)^t
		\le
		\beta^t I.
		\]
		By \eqref{eq:spectral-condition},
		\[
		\bigl(B^{1/2}A^{-1}B^{1/2}\bigr)^t
		\le
		\bigl((1-t)+t\alpha\bigr)I.
		\]
		On the other hand,
		$
		A^{-1/2}BA^{-1/2}\ge\alpha I,
		$
		and therefore
		\[
		(1-t)I+tA^{-1/2}BA^{-1/2}
		\ge
		\bigl((1-t)+t\alpha\bigr)I.
		\]
		Thus
		\[
		\bigl(B^{1/2}A^{-1}B^{1/2}\bigr)^t
		\le
		(1-t)I+tA^{-1/2}BA^{-1/2}.
		\]
		Congruence by \(A^{1/2}\) yields
		$
		A\star_tB\le(1-t)A+tB.
		\qedhere$
	\end{proof}
	
	\begin{remark}\label{rem:spectral-condition-monotone}
		The sufficient condition \eqref{eq:spectral-condition} is monotone with
		respect to the parameter. More precisely, if
		$
		\beta^t\le(1-t)+t\alpha
		$
		for some \(t\in(0,1)\), then
		$
		\beta^s\le(1-s)+s\alpha
		$
		for every \(s\in[0,t]\).
		Indeed, let
		$
		f(r):=(1-r)+r\alpha-\beta^r,
		\quad 0\le r\le1.
		$
		Since
		\[
		f''(r)=-\beta^r(\log\beta)^2\le0,
		\]
		the function \(f\) is concave. As \(f(0)=0\) and \(f(t)\ge0\), concavity gives
		\(f(s)\ge0\) for every \(s\in[0,t]\). Consequently,
		\[
		A\star_sB\le(1-s)A+sB,
		\qquad 0\le s\le t.
		\]
	\end{remark}
	
	Additional assumptions are needed for the directed comparison between $A\star_t B$ and $A\nabla_t B.$
	neither the L\"owner inequality nor weak log-majorization holds in general.
	
	\begin{example}\label{ex:no-unconditional-wlog}
Franco, Gan, and Kim \cite{Franco2025} showed that \(A\star_tB\) and
\((1-t)A+tB\) are not generally comparable in the L\"owner order. Weak
log-majorization can also fail. Let
\[
A=
\begin{pmatrix}
1&-2\\
-2&5
\end{pmatrix},
\qquad
B=
\begin{pmatrix}
1&-1\\
-1&5
\end{pmatrix}.
\]
For \(t=0.4\), numerical computation gives
\[
\lambda^\downarrow(A\star_tB)\approx(5.7888,\,0.3008),
\qquad
\lambda^\downarrow((1-t)A+tB)\approx(5.5612,\,0.4388).
\]
Hence
\[
\lambda_{\max}(A\star_tB)>
\lambda_{\max}((1-t)A+tB),
\]
and therefore
\[
A\star_tB\not\prec_{w\log}(1-t)A+tB.
\]
Thus an additional hypothesis is necessary even for \(0<t<\frac12\).
\end{example}
	
	\subsection{Weak log-majorization relations}
	
	For $t>0$, define the special R\'enyi-type mean  \cite{JEONG2025}:
	\[
	Q_{t,t}(A,B)
	:=
	\left(A^{\frac{1-t}{2t}}BA^{\frac{1-t}{2t}}\right)^t.
	\]
 We also write
	 the weighted Bures--Wasserstein mean
	\[
	A\diamond_tB
	:=
	\bigl((1-t)I+t(A^{-1}\sharp B)\bigr)
	A
	\bigl((1-t)I+t(A^{-1}\sharp B)\bigr).
	\]
The \(F\)-mean introduced in \cite{DinhTam2024} is defined by
	\[
	F_t(A,B)
	=
	(A^{-1}\sharp_t B)^{1/2}
	A^{2-2t}
	(A^{-1}\sharp_t B)^{1/2}.
	\]
	The next theorem places a near geometric mean in a conditional
	log-majorization hierarchy with these means.
	
	\begin{theorem}\label{thm:near-geometric-log-chain}
		Let \(A,B\in\mathbb P_n\) satisfy
		$
		A\le B$
		and
		$I\le B,
		$
		and let \(t\in[1/2,1]\). Then
		\[
		B^{-1}\star_tA
		\prec_{w\log}
		F_t(A,B)
		\prec_{\log}
		Q_{t,t}(A,B)
		\prec_{\log}
		A\natural_tB
		\prec_{w\log}
		A\diamond_tB
		\prec_{w\log}
		(1-t)A+tB.
		\]
	\end{theorem}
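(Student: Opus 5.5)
The chain has five links, and I would prove them one at a time. By \eqref{eq:exterior-largest} and Lemma~\ref{lem:exterior-powers}, each weak log-majorization reduces to a largest-eigenvalue inequality applied to $\Lambda^k A,\Lambda^k B$. This reduction is legitimate because the hypotheses transfer to exterior powers: $A\le B$ gives $\Lambda^kA\le\Lambda^kB$ by Lemma~\ref{lem:exterior-powers}(4), and $I\le B$ gives $I=\Lambda^k I\le\Lambda^kB$. Every mean in the chain is built from products, real powers and geometric means, and all of these commute with $\Lambda^k$. So it suffices to prove $\lambda_{\max}(\text{left})\le\lambda_{\max}(\text{right})$ for each link. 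For the two $\prec_{\log}$ links we must also check equality of determinants, which is immediate because every mean here has determinant $(\det A)^{1-t}(\det B)^t$ (Theorem~\ref{thm:near-geometric-basic}(5) and the analogous multiplicativity of the other formulas).

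\emph{The two rightmost links.} These are unconditional, and in fact hold in the L\"owner order. Put $M=A^{-1}\sharp B$, so that $MAM=B$ by Lemma~\ref{lem:riccati-geometric}. Expanding,
\[
(1-t)A+tB-A\diamond_tB=t(1-t)(I-M)A(I-M)\ge0 .
\]
This gives the last $\prec_{w\log}$ via \eqref{eq:order-hierarchy}. For $A\natural_tB\prec_{w\log}A\diamond_tB$, note that $A\natural_tB=M^tAM^t$ while $A\diamond_tB=N A N$ with $N=(1-t)I+tM$. Both $M^t$ and $N$ are functions of $M$, hence commute, and $M^t\le N$ by the scalar weighted AM--GM inequality. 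Lemma~\ref{lem:near-order-congruence} then gives $A\natural_tB\preceq A\diamond_tB$, and \eqref{eq:order-hierarchy} yields the weak log-majorization.

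\emph{The middle links.} For $F_t(A,B)\prec_{\log}Q_{t,t}(A,B)\prec_{\log}A\natural_tB$ I would invoke the known results for $t\in[1/2,1]$ from \cite{DinhTam2024} and \cite{JEONG2025}. If a self-contained argument is preferred, I would use the same exterior-power reduction. First normalize so that $\lambda_{\max}(\text{right side})=1$, which is possible because these two links are jointly homogeneous in $(A,B)$. This turns the claim into a L\"owner statement of the form $X\le I\Rightarrow Y\le I$. I would then prove that implication using Ando--Hiai type monotonicity of $\sharp_t$, together with the Araki--Lieb--Thirring inequality for the power $t$ (or $1/t$).

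\emph{The first link, which I expect to be the main obstacle.} This is the only place where the hypotheses $A\le B$ and $I\le B$ are needed, and the inequality is not homogeneous. Write $X=A^{1/2}BA^{1/2}$, so $B^{-1}\star_tA=B^{-1/2}X^tB^{-1/2}=_\lambda X^{t/2}B^{-1}X^{t/2}$. Since $A\le B$ implies $B^{-1}\le A^{-1}$, we get
\[
\lambda_{\max}(B^{-1}\star_tA)\le\lambda_{\max}(X^{t/2}A^{-1}X^{t/2})=\lambda_{\max}(A^{-1/2}X^tA^{-1/2}).
\]
On the other side, $F_t(A,B)=_\lambda A^{1-t}(A^{-1}\sharp_tB)A^{1-t}$. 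I would bound this from below by $A^{1-t}(A^{-1}\sharp_tA)A^{1-t}=A^{1-2t}$, using monotonicity of $\sharp_t$ and $A\le B$. Alternatively, I would rewrite $A^{-1}\sharp_tB=A^{-1/2}X^tA^{-1/2}$, so that
\[
F_t(A,B)=_\lambda A^{1/2-t}X^tA^{1/2-t}.
\]
The comparison then becomes
\[
\lambda_{\max}\bigl(A^{-1/2}X^tA^{-1/2}\bigr)\le\lambda_{\max}\bigl(A^{1/2-t}X^tA^{1/2-t}\bigr).
\]
I would try to obtain this from $I\le B$ combined with $t\ge1/2$, by showing that the extra factor $A^{1-t}$ costs at most what $B\ge I$ provides (for instance through $A^{1-t}\ge\ldots$ on the relevant eigenvector). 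This final estimate is the step I am least sure of. If it fails, I would fall back to comparing both sides with $\lambda_{\max}(B)^t$-type scalar bounds.
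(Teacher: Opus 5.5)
\textbf{The first link has a genuine gap.} Your last two links are correct. The identity $(1-t)A+tB-A\diamond_tB=t(1-t)(I-M)A(I-M)$ and the congruence argument via Lemma~\ref{lem:near-order-congruence} both work, and they are more self-contained than the paper, which cites \cite{JEONG2025} and \cite{BHATIA20191}. Citing the literature for $F_t\prec_{\log}Q_{t,t}\prec_{\log}A\natural_tB$ is also what the paper does.

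The problem is your first step, replacing $B^{-1}$ by $A^{-1}$ in $X^{t/2}B^{-1}X^{t/2}$. It discards exactly the information you need, and the reduced inequality
\[
\lambda_{\max}\bigl(A^{-1/2}X^tA^{-1/2}\bigr)\le\lambda_{\max}\bigl(A^{1/2-t}X^tA^{1/2-t}\bigr)
\]
is false under the hypotheses. Take $A=\epsilon I$, $B=I$ with $0<\epsilon<1$ and $t\in[1/2,1)$. Then the left side is $\epsilon^{t-1}>1$ and the right side is $\epsilon^{1-t}<1$, while the original link $\epsilon^t\le\epsilon^{1-t}$ holds. Since this example satisfies both $A\le B$ and $I\le B$, no later use of $I\le B$ can rescue this route. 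Your alternative lower bound is also miscomputed: $A^{-1}\sharp_tA=A^{2t-1}$, so $A^{1-t}(A^{-1}\sharp_tA)A^{1-t}=A$, not $A^{1-2t}$. The resulting bound $F_t\ge_\lambda A$ does not dominate $B^{-1}\star_tA$ either; in the same example $\epsilon^t>\epsilon$.

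\textbf{The missing idea} is to write both sides in the same sandwich form and compare the middle factors directly. You already have $B^{-1}\star_tA=_\lambda X^{t/2}B^{-1}X^{t/2}$ and $F_t(A,B)=_\lambda A^{1/2-t}X^tA^{1/2-t}$, and the latter is also $=_\lambda X^{t/2}A^{1-2t}X^{t/2}$. Now argue as follows:
\begin{enumerate}
\item Since $0\le 2t-1\le 1$, the L\"owner--Heinz inequality gives $A^{2t-1}\le B^{2t-1}$.
\item Since $B\ge I$, we have $B^{2t-1}\le B$.
\item Hence $A^{2t-1}\le B$, which after inversion is $B^{-1}\le A^{1-2t}$.
\item Congruence by $X^{t/2}$ gives the L\"owner inequality $X^{t/2}B^{-1}X^{t/2}\le X^{t/2}A^{1-2t}X^{t/2}$.
\end{enumerate}
Therefore $B^{-1}\star_tA\le_\lambda F_t(A,B)$, and the weak log-majorization follows from \eqref{eq:order-hierarchy}, with no exterior-power reduction needed. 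This is precisely where both hypotheses and the restriction $t\ge 1/2$ enter, which is the paper's argument.
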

	
	\begin{proof}
		We first show that
		$
		B^{-1}\star_tA
		\prec_{w\log}
		F_t(A,B).
		$
        
		Set
		$
		M:=A^{1/2}BA^{1/2}.
		$
		Then
		\[
		B^{-1}\star_tA
		=
		B^{-1/2}M^tB^{-1/2}
		=_\lambda
		M^{t/2}B^{-1}M^{t/2}.
		\]
		Moreover,
		\begin{align*}
			F_t(A,B)
			&=
			(A^{-1}\sharp_tB)^{1/2}
			A^{2-2t}
			(A^{-1}\sharp_tB)^{1/2} \\
			&=_\lambda
			A^{1-t}(A^{-1}\sharp_tB)A^{1-t} 
			=
			A^{\frac{1}{2}-t}M^tA^{\frac{1}{2}-t} 
			=_\lambda
			M^{t/2}A^{1-2t}M^{t/2}.
		\end{align*}
		
		By \(t\in[1/2,1]\), we get \(2t-1\in[0,1]\). From \(A\le B\), the
		L\"owner--Heinz inequality gives
		$
		A^{2t-1}\le B^{2t-1}.
		$ 
		Since \(B\ge I\) and \(2t-1\le1\),
		$
		B^{2t-1}\le B,
		$
		 hence
		$
		A^{2t-1}\le B,
		$
		and, after inversion,
		   $
		B^{-1}\le A^{1-2t}.
		$ 
		Congruence by \(M^{t/2}\) yields
		$
		M^{t/2}B^{-1}M^{t/2}
		\le
		M^{t/2}A^{1-2t}M^{t/2}.
		$
		Therefore,
		\[
		B^{-1}\star_tA
		\prec_{w\log}
		F_t(A,B).
		\]
		
		By \cite[Corollary~3.9]{JEONG2025},
		\[
		F_t(A,B)
		\prec_{\log}
		Q_{t,t}(A,B)
		\prec_{\log}
		A\natural_tB
		\prec_{w\log}
		A\diamond_tB.
		\]
		Finally, the arithmetic--Wasserstein mean inequality
		\cite{BHATIA20191} gives
		\[
		A\diamond_tB\le(1-t)A+tB.
		\]
		Hence
		\[
		A\diamond_tB
		\prec_{w\log}
		(1-t)A+tB.
		\]
		Combining these relations yields
		\[
		B^{-1}\star_tA
		\prec_{w\log}
		F_t(A,B)
		\prec_{\log}
		Q_{t,t}(A,B)
		\prec_{\log}
		A\natural_tB
		\prec_{w\log}
		A\diamond_tB
		\prec_{w\log}
		(1-t)A+tB.
		\]
	\end{proof}
	
	\section{The near spectral geometric curve}
	
	Motivated by the Fiedler--Pt\'ak spectral geometric mean
	\cite{FiedlerPtak1997}, we define the \emph{near spectral geometric  curve} of
	$A,B\in\mathbb P_n$ by
	\begin{equation}\label{Q1}
		A\widetilde{\natural}_tB
		:=
		(A^{-1}\star B)^tA(A^{-1}\star B)^t, \qquad t\in\mathbb R.
	\end{equation}

	Because \(A\widetilde{\natural}_0B=A\), whereas
	\(A\widetilde{\natural}_1B\) need not equal \(B\), it is more
	naturally viewed as a curve based at \(A\) rather than as an interpolation
	from \(A\) to \(B\). We will show that
the curve is essentially a spectral geometric curve from $A$ to the point at $t=1.$ Our focus, however, is about the relations of points on this curve with the corresponding point on a mean curve of $A$ and $B.$
	
	The Riccati characterization in Lemma~\ref{lem:riccati-geometric} gives an immediate
	description of each point of the curve.
	
	\begin{theorem}\label{thm:near-spectral-riccati}
		Let \(A,B\in\mathbb P_n\) and \(t\in\mathbb R\). Then
		\(X=A\widetilde{\natural}_tB\) is the unique matrix \(X\in\mathbb P_n\)
		satisfying
		\[
		A^{-1}\sharp X=(A^{-1}\star B)^t.
		\]
	\end{theorem}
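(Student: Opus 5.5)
The argument is a direct application of the Riccati characterization in Lemma~\ref{lem:riccati-geometric}, with $Y:=(A^{-1}\star B)^t$. First I check that $Y\in\mathbb P_n$. By definition \eqref{Q0},
\[
A^{-1}\star B=A^{-1/2}\bigl(B^{1/2}AB^{1/2}\bigr)^{1/2}A^{-1/2},
\]
which is a congruence of a positive definite matrix, so it lies in $\mathbb P_n$. Its real power $(A^{-1}\star B)^t$, defined by functional calculus, is then positive definite for every $t\in\mathbb R$. Hence $Y\in\mathbb P_n$.

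Next I apply Lemma~\ref{lem:riccati-geometric} with this $A$ and $Y$. The equation $A^{-1}\sharp X=Y$ has a unique solution in $\mathbb P_n$, namely
\[
X=YAY=(A^{-1}\star B)^tA(A^{-1}\star B)^t .
\]
By \eqref{Q1}, this is exactly $A\widetilde{\natural}_tB$. So the point $A\widetilde{\natural}_tB$ solves the equation, and uniqueness comes directly from the lemma.

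For completeness I would also note why the lemma holds, using the symmetric form of the geometric mean: $A^{-1}\sharp X=Y$ holds if and only if $YAY=X$. One direction is the Riccati identity $(A^{-1}\sharp X)A(A^{-1}\sharp X)=X$. For the other, if $X=YAY$, then $A^{-1}\sharp(YAY)=Y$, because $Y$ is the unique positive solution of $ZAZ=X$. The only point that needs any care is confirming that $Y$ is positive definite for all real $t$, including $t<0$, since the lemma requires $Y\in\mathbb P_n$. That is immediate from the spectral theorem. There is no genuine obstacle here.
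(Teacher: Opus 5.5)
Your proposal is correct and follows the paper's proof: both apply Lemma~\ref{lem:riccati-geometric} with $Y=(A^{-1}\star B)^t$ and read off $X=YAY=A\widetilde{\natural}_tB$. Your explicit check that $Y\in\mathbb P_n$ for every real $t$, including $t<0$, is a detail the paper leaves implicit.
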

	
	\begin{proof}
		By Lemma~\ref{lem:riccati-geometric}, the equation $A^{-1}\sharp X=Y$ has the unique
		positive definite solution $X=YAY$. Taking $Y=(A^{-1}\star B)^t$
		gives the assertion.
	\end{proof}

	We next collect some basic properties of the near spectral geometric curve.

	\begin{theorem}\label{thm:near-spectral-basic}
		Let \(A,B\in\mathbb P_n\) and \(t\in\mathbb R\).
		\begin{enumerate}
			\item
			Set
			\begin{equation}\label{eq:near-spectral-t1}
				D:=A\widetilde{\natural}_1B=
				A^{-1/2}B^{1/2}AB^{1/2}A^{-1/2}.
			\end{equation}
			Then $D= A^{-1}\star_1B$ and
			\begin{equation}\label{eq:near-spectral-via-spectral}
				A\widetilde{\natural}_tB=A\natural_tD.
			\end{equation}
			Moreover  ~$ A\widetilde{\natural}_1B=B
			\quad\Longleftrightarrow\quad
			AB=BA.
			$
			
			\item
			If \(AB=BA\), then
			$
			A\widetilde{\natural}_tB=A^{1-t}B^t.
			$
			
			\item
			For \(a,b>0\),
			$
			(aA)\widetilde{\natural}_t(bB)
			=
			a^{1-t}b^t
			\bigl(A\widetilde{\natural}_tB\bigr).
			$
			
			\item
			For every unitary matrix \(U\),
			\[
			(UAU^*)\widetilde{\natural}_t(UBU^*)
			=
			U\bigl(A\widetilde{\natural}_tB\bigr)U^*.
			\]
			
			\item
			$
			A^{-1}\widetilde{\natural}_tB^{-1}
			=
			\bigl(A\widetilde{\natural}_tB\bigr)^{-1}.
			$
			
			\item
			$
			\det\bigl(A\widetilde{\natural}_tB\bigr)
			=
			(\det A)^{1-t}(\det B)^t.
			$
		\end{enumerate}
	\end{theorem}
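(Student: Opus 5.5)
The proof is mostly direct computation from the definitions, using Theorem~\ref{thm:near-spectral-riccati} and Theorem~\ref{thm:near-geometric-basic}.

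\textbf{Item (1).} Set $M:=A^{-1}\star B=A^{-1/2}(B^{1/2}AB^{1/2})^{1/2}A^{-1/2}$. Then
\[
MAM=A^{-1/2}(B^{1/2}AB^{1/2})A^{-1/2}=A^{-1/2}B^{1/2}AB^{1/2}A^{-1/2},
\]
which is both $D$ and, by definition with $t=1$ and base $A^{-1}$, the matrix $A^{-1}\star_1 B$. By Lemma~\ref{lem:riccati-geometric}, $D=MAM$ forces $A^{-1}\sharp D=M$. Hence
\[
A\natural_tD=(A^{-1}\sharp D)^tA(A^{-1}\sharp D)^t=M^tAM^t=A\widetilde{\natural}_tB,
\]
which is \eqref{eq:near-spectral-via-spectral}.

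For the equivalence: if $AB=BA$, then $D=A^{-1}AB=B$ directly. Conversely, suppose $D=B$. Then $A^{-1/2}B^{1/2}AB^{1/2}A^{-1/2}=B$, so $B^{1/2}AB^{1/2}=A^{1/2}BA^{1/2}$. I expect this converse to be the main obstacle. My first attempt is to set $X=B^{1/2}A^{1/2}$, so the identity reads $XX^*=X^*X$, i.e. $X$ is normal. The polar parts of a normal matrix commute: writing $X=U|X|$ with $|X|=(A^{1/2}BA^{1/2})^{1/2}$, normality gives $U|X|=|X|U$. I would then use that a normal product of two positive definite matrices must have them commuting. The eigenvalues of $B^{1/2}A^{1/2}$ are real and positive, being similar to $A^{1/4}B^{1/2}A^{1/4}$. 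A normal matrix with real eigenvalues is Hermitian, so $B^{1/2}A^{1/2}=(B^{1/2}A^{1/2})^*=A^{1/2}B^{1/2}$, and therefore $A$ and $B$ commute.

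\textbf{Item (2).} If $AB=BA$, then $A^{-1}\star B=A^{-1/2}B^{1/2}$ by Theorem~\ref{thm:near-geometric-basic}(1). Everything commutes, so $A\widetilde{\natural}_tB=A^{-t}B^tA=A^{1-t}B^t$.

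\textbf{Items (3) and (4).} Theorem~\ref{thm:near-geometric-basic}(2) gives $(aA)^{-1}\star(bB)=a^{-1/2}b^{1/2}M$. Then $(a^{-1/2}b^{1/2})^{2t}\,a=a^{1-t}b^t$, which is item (3). Item (4) follows from unitary covariance of $\star$ and of functional calculus.

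\textbf{Item (5).} By Theorem~\ref{thm:near-geometric-basic}(4), $A\star B^{-1}=(A^{-1}\star B)^{-1}=M^{-1}$. Hence
\[
A^{-1}\widetilde{\natural}_tB^{-1}=M^{-t}A^{-1}M^{-t}=(M^tAM^t)^{-1}.
\]

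\textbf{Item (6).} Theorem~\ref{thm:near-geometric-basic}(5) gives $\det M=(\det A)^{-1/2}(\det B)^{1/2}$. Therefore
\[
\det\bigl(A\widetilde{\natural}_tB\bigr)=(\det M)^{2t}\det A=(\det A)^{1-t}(\det B)^t.
\]
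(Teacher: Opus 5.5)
Your proof is correct and follows essentially the paper's route: you obtain $A^{-1}\sharp D=M$ directly from the Riccati lemma, where the paper cites the equivalent identity \eqref{eq:star-to-sharp}, and your converse is the paper's argument applied to the adjoint $B^{1/2}A^{1/2}$ of its $A^{1/2}B^{1/2}$ (a normal matrix with positive spectrum must be Hermitian). Your aside about commuting polar parts is never used and can be deleted.
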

	
	\begin{proof}
		\begin{enumerate}
			\item
			By \eqref{eq:star-to-sharp},
			$
			A^{-1}\star B=A^{-1}\sharp D,
			$
			where \(D=A^{-1}\star_1B\). Hence
			\[
			A\widetilde{\natural}_tB
			=
			(A^{-1}\sharp D)^t
			A
			(A^{-1}\sharp D)^t
			=
			A\natural_tD.
			\]
			At \(t=1\),
			\begin{align*}
				D
				&=
				(A^{-1}\star B)A(A^{-1}\star B)
				=
				A^{-1/2}B^{1/2}AB^{1/2}A^{-1/2}
				=
				A^{-1}\star_1B.
			\end{align*}
			
			Finally, $D=B$ is equivalent to
			\[
			B^{1/2}AB^{1/2}=A^{1/2}BA^{1/2},
			\]
			so $A^{1/2}B^{1/2}$ is normal. Since a product of two positive definite
			matrices is similar to a positive definite matrix, its spectrum is positive;
			a normal matrix with positive spectrum is positive definite. Hence
			$A^{1/2}B^{1/2}=B^{1/2}A^{1/2}$, and therefore $AB=BA$. The converse is
			immediate.
			
			\item
			If \(AB=BA\), then
			\[
			A^{-1}\star B=A^{-1/2}B^{1/2},
			\]
			and consequently
			\[
			A\widetilde{\natural}_tB
			=
			(A^{-1/2}B^{1/2})^t
			A
			(A^{-1/2}B^{1/2})^t
			=
			A^{1-t}B^t.
			\]
			
			\item
			By homogeneity of the near geometric mean,
			\[
			(aA)^{-1}\star(bB)
			=
			a^{-1/2}b^{1/2}(A^{-1}\star B).
			\]
			Therefore,
			\[
			(aA)\widetilde{\natural}_t(bB)
			=
			a^{1-t}b^t
			\bigl(A\widetilde{\natural}_tB\bigr).
			\]
			
			\item
			The assertion follows immediately from the unitary invariance of
			\(\star\) and the functional calculus.
			
			\item
			By the inversion property of the near geometric mean,
			\[
			A\star B^{-1}
			=
			(A^{-1}\star B)^{-1}.
			\]
			Thus
			\begin{align*}
				A^{-1}\widetilde{\natural}_tB^{-1}
				&=
				(A\star B^{-1})^tA^{-1}(A\star B^{-1})^t\\
				&=
				(A^{-1}\star B)^{-t}
				A^{-1}
				(A^{-1}\star B)^{-t}
				=
				\bigl(A\widetilde{\natural}_tB\bigr)^{-1}.
			\end{align*}
			
			\item
			Since
			$
			\det(A^{-1}\star B)
			=
			(\det A)^{-1/2}(\det B)^{1/2},
			$
			we obtain
			\[
			\det\bigl(A\widetilde{\natural}_tB\bigr)
			=
			\det(A^{-1}\star B)^{2t}\det A
			=
			(\det A)^{1-t}(\det B)^t.
			\qedhere\]
		\end{enumerate}
	\end{proof}
	
	By \eqref{eq:near-spectral-via-spectral}, the near spectral geometric
	curve coincides with the spectral geometric curve passing through \(A\) at
	\(t=0\) and \(D\) at \(t=1\). The next result compares \(D\) with the
	original input \(B\).
	
	\begin{theorem}\label{thm:point-one-logmajorization}
		Let \(A,B\in\mathbb P_n\), and set $D
		=A\widetilde{\natural}_1B$ as in \eqref{eq:near-spectral-t1}.
		Then
		\[
		B\prec_{\log}D.
		\]
		In particular,
		\[
		\lambda_{\min}(D)\le \lambda_{\min}(B)\le\lambda_{\max}(B)\le\lambda_{\max}(D).
		\]
	\end{theorem}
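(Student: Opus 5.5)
We need $B\prec_{\log}D$, where $D=A^{-1/2}B^{1/2}AB^{1/2}A^{-1/2}$. The plan is to realize $D$ as $X^*X$ and $B$ as $XX^*$ times something, and compare singular values.

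Write $D=(A^{1/2}B^{1/2}A^{-1/2})^*(A^{1/2}B^{1/2}A^{-1/2})$. Set $T:=A^{1/2}B^{1/2}A^{-1/2}$, which is similar to $B^{1/2}$. Then $D=T^*T$, so $\lambda_j^\downarrow(D)=s_j(T)^2$. Also $\lambda_j(B)=\lambda_j(B^{1/2})^2=|\lambda_j(T)|^2$, since $T$ is similar to $B^{1/2}$ and so has eigenvalues $\lambda_j(B^{1/2})>0$.

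Weyl's majorant theorem then gives $\prod_{j=1}^k|\lambda_j(T)|\le\prod_{j=1}^k s_j(T)$ for each $k$, with equality at $k=n$ because $|\det T|$ equals both products. Squaring yields
\[
\prod_{j=1}^k\lambda_j^\downarrow(B)\le\prod_{j=1}^k\lambda_j^\downarrow(D),\quad k=1,\dots,n,
\]
with equality at $k=n$. This is exactly $B\prec_{\log}D$.

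Equivalently, staying within the paper's tools, one can use exterior powers (Lemma~\ref{lem:exterior-powers}). By \eqref{eq:exterior-largest} and multiplicativity, it suffices to prove $\lambda_{\max}(B)\le\lambda_{\max}(D)$ for all $A,B$, since $\Lambda^k$ converts $B$ and $D$ into the same expressions in $\Lambda^kA$ and $\Lambda^kB$. This case is the spectral-radius bound $\rho(T)\le\|T\|$, giving $\lambda_{\max}(B)=\rho(T)^2\le\|T\|^2=\lambda_{\max}(D)$.

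The determinant equality follows from Theorem~\ref{thm:near-spectral-basic}(6) with $t=1$.

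For the "in particular" statement, the case $k=1$ gives $\lambda_{\max}(B)\le\lambda_{\max}(D)$. For the minimum, take $k=n-1$ and divide the determinant equality by it, giving $\lambda_{\min}(D)\le\lambda_{\min}(B)$. Alternatively, apply the $k=1$ inequality to the inverses: by Theorem~\ref{thm:near-spectral-basic}(5) the point $D$ built from $A^{-1},B^{-1}$ is $D^{-1}$, so $\lambda_{\max}(B^{-1})\le\lambda_{\max}(D^{-1})$.

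The only real step is identifying $D=T^*T$ with $T$ similar to $B^{1/2}$. Once that is seen, everything reduces to the standard fact that the spectral radius is at most the spectral norm, so no serious obstacle is expected.
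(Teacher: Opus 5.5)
Your proof is correct and is essentially the paper's argument: the paper also sets $X=A^{1/2}B^{1/2}A^{-1/2}$, uses $X\sim B^{1/2}$ and $X^*X=D$, and applies Weyl's majorant theorem with equality at $k=n$. Your exterior-power reduction to $\rho(T)\le\|T\|$ and your derivation of $\lambda_{\min}(D)\le\lambda_{\min}(B)$ are valid additions that the paper leaves implicit; for the latter you use either $k=n-1$ with the determinant identity or the inversion property in Theorem~\ref{thm:near-spectral-basic}(5).
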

	
	\begin{proof}
		Set
		$
		X:=A^{1/2}B^{1/2}A^{-1/2}.
		$
		Since \(X\) is similar to \(B^{1/2}\),
		\[
		|\lambda_j(X)|^\downarrow
		=
		\lambda_j^\downarrow(B)^{1/2},
		\qquad j=1,\ldots,n.
		\]
		Moreover,
		\[
		X^*X
		=
		A^{-1/2}B^{1/2}AB^{1/2}A^{-1/2}
		=
		D,
		\]
		so
		$
		s_j(X)
		=
		\lambda_j^\downarrow(D)^{1/2}.
		$
		By Weyl's majorant theorem,
		\[
		\prod_{j=1}^k|\lambda_j(X)|^\downarrow
		\le
		\prod_{j=1}^ks_j(X),
		\qquad k=1,\ldots,n,
		\]
		with equality for \(k=n\). Hence
		\[
		\prod_{j=1}^k\lambda_j^\downarrow(B)
		\le
		\prod_{j=1}^k\lambda_j^\downarrow(D),
		\qquad k=1,\ldots,n,
		\]
		with equality for \(k=n\). Therefore \(B\prec_{\log}D\).
	\end{proof}
	
	The point of the near spectral geometric curve at $t=\frac12$ may differ from the spectral geometric mean of the original inputs, but the two matrices are isospectral.
	
	\begin{theorem}\label{thm:midpoint-eigenvalues}
		Let \(A,B\in\mathbb P_n\). Then
		$
		A\widetilde{\natural}B
		=_\lambda
		A\natural B.
		$
		Consequently,
		\(A\widetilde{\natural}B\) and \(A\natural B\) are unitarily similar.
	\end{theorem}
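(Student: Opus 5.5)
The plan is to reduce both sides, via the elementary fact that $Y^{1/2}AY^{1/2}$ and $A^{1/2}YA^{1/2}$ are similar for positive definite $Y$, to positive definite square roots whose squares are similar to $AB$. Isospectrality then follows immediately.

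\emph{Left-hand side.} Set $N:=A^{-1}\star B$. By definition~\eqref{Q0} with $A^{-1}$ in the first slot,
\[
N=A^{-1/2}\bigl(B^{1/2}AB^{1/2}\bigr)^{1/2}A^{-1/2}.
\]
By~\eqref{Q1}, $A\widetilde{\natural}B=N^{1/2}AN^{1/2}$. Since $N^{1/2}AN^{1/2}=N^{1/2}A^{1/2}\cdot A^{1/2}N^{1/2}$ and $XY$, $YX$ have the same spectrum, we get
\[
A\widetilde{\natural}B=_\lambda A^{1/2}NA^{1/2}=\bigl(B^{1/2}AB^{1/2}\bigr)^{1/2}.
\]
Hence the eigenvalues of $A\widetilde{\natural}B$ are the square roots of the eigenvalues of $B^{1/2}AB^{1/2}$, which is similar to $AB$.

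\emph{Right-hand side.} Set $G:=A^{-1}\sharp B$, so $A\natural B=G^{1/2}AG^{1/2}$. The same argument gives
\[
A\natural B=_\lambda A^{1/2}GA^{1/2}.
\]
By congruence invariance of the geometric mean, $A^{1/2}(A^{-1}\sharp B)A^{1/2}=I\sharp(A^{1/2}BA^{1/2})=(A^{1/2}BA^{1/2})^{1/2}$. One can also check this directly with Lemma~\ref{lem:riccati-geometric}: $Y=(A^{1/2}BA^{1/2})^{1/2}$ satisfies $YY=A^{1/2}BA^{1/2}$. Thus the eigenvalues of $A\natural B$ are the square roots of those of $A^{1/2}BA^{1/2}$, which is again similar to $AB$.

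\emph{Conclusion.} Both matrices have spectrum $\{\lambda_j(AB)^{1/2}\}$, counted with multiplicity, so $A\widetilde{\natural}B=_\lambda A\natural B$. Two Hermitian matrices with the same eigenvalues are each unitarily diagonalizable to the same diagonal matrix, hence they are unitarily similar. There is no real obstacle here. The only point requiring care is the correct form of $A^{-1}\star B$, namely which factor is conjugated by $A^{-1/2}$.
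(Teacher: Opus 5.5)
Your proof is correct and is essentially the paper's argument: both show that each side has eigenvalues $\lambda_j(AB)^{1/2}$. The paper goes through $D=A\widetilde{\natural}_1B$ (so $A\widetilde{\natural}B=A\natural D$ with $AD\sim AB$) and cites Fiedler--Pt\'ak for $(A\natural X)^2\sim AX$, whereas you check this spectral fact directly on both sides with the $XY$/$YX$ trick, which makes your version self-contained.
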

	
	\begin{proof}
		By \eqref{eq:near-spectral-via-spectral}, $A\widetilde{\natural}B=A\natural D$ for
		$
		D
		=
		A^{-1/2}B^{1/2}AB^{1/2}A^{-1/2}.
		$
		Moreover, there are similarity relations
		\[
		AD
		\sim
		B^{1/2}AB^{1/2}
		\sim
		AB.
		\]
		For the spectral geometric mean,
		\[
		(A\natural D)^2\sim AD,
		\qquad
		(A\natural B)^2\sim AB;
		\]
		see, for example, \cite{FiedlerPtak1997}.
		Therefore
		$
		\bigl(A\widetilde{\natural}B\bigr)^2
		\sim
		\bigl(A\natural B\bigr)^2.
		$
        
		Since both matrices are positive definite,
		$
		A\widetilde{\natural}B
		=_\lambda
		A\natural B.
		$
		The final assertion follows from the spectral theorem.
	\end{proof}

	The following result shows that commutativity at two distinct parameter values characterizes commutativity of the original inputs.
	
	\begin{theorem}\label{thm:near-spectral-geometry-commutativity}
		Let \(A,B\in\mathbb P_n\). The following are equivalent:
		\begin{enumerate}
			\item
			\(
			AB=BA.
			\)
			
			\item
			\(A\widetilde{\natural}_tB\) and
			\(A\widetilde{\natural}_sB\) commute for two distinct
			\(t,s\in\mathbb R\).
			
			\item
			The family
			$
			\{A\widetilde{\natural}_tB:t\in\mathbb R\}
			$
			consists of mutually commuting matrices.
		\end{enumerate}
	\end{theorem}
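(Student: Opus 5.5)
The plan mirrors the proof of Theorem~\ref{thm:near-geometry-commutativity}. The implication $(3)\Rightarrow(2)$ is trivial. For $(1)\Rightarrow(3)$ we use Theorem~\ref{thm:near-spectral-basic}(2): if $AB=BA$, then $A\widetilde{\natural}_tB=A^{1-t}B^t$ for all $t\in\mathbb R$, and these matrices commute pairwise. The substance lies in $(2)\Rightarrow(1)$.

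Set $M:=A^{-1}\star B=A^{-1/2}(B^{1/2}AB^{1/2})^{1/2}A^{-1/2}\in\mathbb P_n$, so that $P_t:=A\widetilde{\natural}_tB=M^tAM^t$. Suppose $P_tP_s=P_sP_t$ with $t\neq s$. Rather than manipulating products directly, I would use Theorem~\ref{thm:near-spectral-riccati}, which gives $A^{-1}\sharp P_t=M^t$ and $A^{-1}\sharp P_s=M^s$. Commuting positive definite matrices are not quite enough by themselves, so I would first get $AM=MA$. From $M^tAM^{2t}\cdot$ type identities this is not immediate. Instead, write $P_t=M^{t-s}P_sM^{t-s}$, which follows from $P_t=M^{t}AM^{t}$ and $A=M^{-s}P_sM^{-s}$. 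Put $N:=M^{t-s}>0$ and $Q:=P_s$. Then $NQN$ commutes with $Q$.

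The key step is to show that $NQN$ commuting with $Q$ forces $N$ to commute with $Q$. Work in an eigenbasis of $Q$: both $Q$ and $NQN$ are block diagonal with respect to the eigenspaces of $Q$. Then $Q^{-1/2}(NQN)Q^{-1/2}=(Q^{1/2}NQ^{1/2})^{\ast}$-type expressions are awkward, so I would argue as follows. Let $R:=Q^{1/2}NQ^{1/2}$. Then $R^2=Q^{1/2}(NQN)Q^{1/2}$ commutes with $Q$. Hence $R=(R^2)^{1/2}$ commutes with $Q$, so $NQ=Q^{-1/2}RQ^{1/2}=R=QN$. I expect this trick to be the main point; it is essentially the uniqueness argument in Lemma~\ref{lem:riccati-geometric}.

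Thus $M^{t-s}$ commutes with $P_s=M^sAM^s$. Since $M^{t-s}$ also commutes with $M^{\pm s}$, it commutes with $A$. Functional calculus with $t-s\neq0$ then gives $MA=AM$.

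Finally, we recover $B$. From $A^{1/2}MA^{1/2}=(B^{1/2}AB^{1/2})^{1/2}$ and the commutation of $A$ with $M$, we get $B^{1/2}AB^{1/2}=A^2M^2$, i.e.\ $D=M A M=AM^2$ commutes with $A$. Note that $D=A^{-1/2}(B^{1/2}AB^{1/2})A^{-1/2}$. Alternatively, $B^{1/2}$ is the unique positive solution $X$ of $XAX=A^2M^2$ by Lemma~\ref{lem:riccati-geometric}, namely $X=A^{-1}\sharp(A^2M^2)$. Since $A$ and $M$ commute, the matrix $AM\cdot$ adjusted appropriately, namely $X=A^{1/2}M$, is a positive definite solution. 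Indeed, $A^{1/2}MAA^{1/2}M=A^2M^2$. By uniqueness, $B^{1/2}=A^{1/2}M$, which commutes with $A$. Hence $AB=BA$.
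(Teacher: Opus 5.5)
Your proof is correct and follows essentially the same route as the paper: you reduce to $M^{t-s}$ commuting with $P_s$, deduce $AM=MA$ by functional calculus, and recover $B^{1/2}=A^{1/2}M$ from uniqueness of the positive solution of $XAX=A^2M^2$. The only difference is that your $R=Q^{1/2}NQ^{1/2}$ argument spells out what the paper gets by observing that $P_s^{-1}\sharp P_t=M^{t-s}$ commutes with $P_s$ whenever $P_s$ and $P_t$ commute.
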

	
	\begin{proof}
		Clearly, \((3)\Rightarrow(2)\). If \(AB=BA\), then
		$
		A\widetilde{\natural}_tB=A^{1-t}B^t,
		$
		so \((1)\Rightarrow(3)\).
		
		For \((2)\Rightarrow(1)\), set
		\(
		M:=A^{-1}\star B
		\quad \mbox{then} \quad
		P_r:=A\widetilde{\natural}_rB=M^rAM^r.
		\)
        
		If \(P_tP_s=P_sP_t\) for \(t\ne s\), then
		\[
		P_s^{-1}\sharp P_t=M^{t-s}.
		\]
		Since \(P_s\) and \(P_t\) commute,
		\(P_s^{-1}\sharp P_t\) commutes with \(P_s\). Hence \(M^{t-s}\)
		commutes with \(P_s\), and since \(t-s\ne0\), functional calculus gives
		$
		MP_s=P_sM.
		$
        
		As \(P_s=M^sAM^s\), it follows that \(MA=AM\).
		Now,
		\[
		M
		=
		A^{-1/2}
		(B^{1/2}AB^{1/2})^{1/2}
		A^{-1/2}.
		\]
		Since \(A\) and \(M\) commute,
		\[
		(B^{1/2}AB^{1/2})^{1/2}=AM.
		\]
		Thus
		\[
		B^{1/2}AB^{1/2}=A^2M^2.
		\]
		The positive definite matrix \(B^{1/2}\) is the unique solution of
		\[
		XAX=A^2M^2.
		\]
		Since \(A\) and \(M\) commute,
		$
		X=A^{1/2}M
		$
		is also a positive definite solution. Hence
		$
		B^{1/2}=A^{1/2}M,
		$
		which commutes with \(A\). Therefore \(AB=BA\).
	\end{proof}
	
	We next characterize parameter monotonicity of the curve with respect to
	the near order.
	
	\begin{theorem}\label{thm:near-order-increasing}
		Let $A,B\in\mathbb P_n$, set $D=A\widetilde{\natural}_1B$, and write
		$P_r=A\widetilde{\natural}_rB$. The following are equivalent:
		\begin{enumerate}
			\item $A^{-1}\star B\ge I$, equivalently
			$
			A\le(B^{1/2}AB^{1/2})^{1/2}.
			$
			\item $A\preceq D$.
			\item $P_t\preceq P_s$ for every $t<s$.
			\item $P_t\preceq P_s$ for some $t<s$.
		\end{enumerate}
		Thus $t\mapsto A\widetilde{\natural}_tB$ is increasing on $\mathbb R$
		with respect to the near order precisely when $A^{-1}\star B\ge I$.
	\end{theorem}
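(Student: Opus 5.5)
The plan is to reduce everything to the key identity \eqref{intro:key-identity}: with $M:=A^{-1}\star B$ and $P_r=M^rAM^r$, we have
\[
P_t^{-1}\sharp P_s=M^{s-t},\qquad t,s\in\mathbb R.
\]
To prove it, note that $P_t^{-1}=M^{-t}A^{-1}M^{-t}$. We look for $Y>0$ with $Y P_t^{-1} Y=P_s$, since by Lemma~\ref{lem:riccati-geometric} (applied with $P_t$ in place of $A$) the unique positive solution of $P_t^{-1}\sharp X=Y$ is $X=YP_tY$. The candidate $Y=M^{s-t}$ works, because $M^{s-t}P_tM^{s-t}=M^{s}AM^{s}=P_s$. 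Hence $P_t^{-1}\sharp P_s=M^{s-t}$. Equivalently, one can use Lemma~\ref{lem:near-order-congruence} directly with the commuting pair $P=M^t$, $Q=M^s$.

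With the identity in hand, the equivalences follow quickly.

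\textbf{(1)$\Leftrightarrow$(2).} Take $t=0$, $s=1$: then $A^{-1}\sharp D=M$, so $A\preceq D$ iff $M\ge I$. For the stated reformulation, $M=A^{-1/2}(B^{1/2}AB^{1/2})^{1/2}A^{-1/2}$ (the formula used in the proof of Theorem~\ref{thm:near-spectral-geometry-commutativity}). Congruence by $A^{1/2}$ shows $M\ge I$ iff $(B^{1/2}AB^{1/2})^{1/2}\ge A$.

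\textbf{(1)$\Rightarrow$(3).} If $M\ge I$ and $s>t$, then $M^{s-t}\ge I$, since the eigenvalues of $M$ are at least $1$ and the exponent is positive. Thus $P_t^{-1}\sharp P_s\ge I$, i.e.\ $P_t\preceq P_s$.

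\textbf{(3)$\Rightarrow$(4)} is trivial.

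\textbf{(4)$\Rightarrow$(1).} If $P_t\preceq P_s$ for some $t<s$, then $M^{s-t}\ge I$ with $s-t>0$. By functional calculus, every eigenvalue $\mu$ of $M$ satisfies $\mu^{s-t}\ge1$, hence $\mu\ge1$, so $M\ge I$.

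The only real content is the key identity, and the main point to get right is the direction of the Riccati lemma. It must be applied with base point $P_t$ and inverse $P_t^{-1}$, not $A$. Because $Y=M^{s-t}$ commutes with the outer factors $M^t$ and $M^s$ (all are powers of the same $M$), the verification is immediate. Everything else is spectral calculus for a single positive matrix.
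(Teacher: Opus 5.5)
Your proof is correct and follows the paper's argument essentially verbatim: both obtain $P_t^{-1}\sharp P_s=M^{s-t}$ from Lemma~\ref{lem:riccati-geometric} via $M^{s-t}P_tM^{s-t}=P_s$, and then read off every equivalence from $M^{s-t}\ge I\iff M\ge I$ for $s>t$. One small slip: the defining equation for $Y=P_t^{-1}\sharp P_s$ is $YP_tY=P_s$, not $YP_t^{-1}Y=P_s$, although your actual verification uses the correct form.
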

	
	\begin{proof}
		Set $M=A^{-1}\star B$. Since $P_r=M^rAM^r$, for all $s,t\in\mathbb R$,
		\[
		M^{s-t}P_tM^{s-t}=P_s.
		\]
		Lemma~\ref{lem:riccati-geometric} therefore gives
		\begin{equation}\label{eq:near-spectral-key}
			P_t^{-1}\sharp P_s=M^{s-t}.
		\end{equation}
		In particular, $A^{-1}\sharp D=M$. For $t<s$,
		\[
		P_t\preceq P_s
		\iff M^{s-t}\ge I
		\iff M\ge I,
		\]
		which proves all equivalences.
	\end{proof}
	
	\begin{theorem}\label{thm:near-order-decreasing}
		Let $A,B\in\mathbb P_n$, set $D=A\widetilde{\natural}_1B$, and write
		$P_r=A\widetilde{\natural}_rB$. The following are equivalent:
		\begin{enumerate}
			\item $A^{-1}\star B\le I$, equivalently
			$
			(B^{1/2}AB^{1/2})^{1/2}\le A.
			$
			\item $D\preceq A$.
			\item $P_s\preceq P_t$ for every $t<s$.
			\item $P_s\preceq P_t$ for some $t<s$.
		\end{enumerate}
		Thus $t\mapsto A\widetilde{\natural}_tB$ is decreasing on $\mathbb R$
		with respect to the near order precisely when $A^{-1}\star B\le I$.
	\end{theorem}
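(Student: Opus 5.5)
The argument mirrors the proof of Theorem~\ref{thm:near-order-increasing}. Set $M=A^{-1}\star B$, so that $P_r=M^rAM^r$ for every $r\in\mathbb R$. The key identity \eqref{eq:near-spectral-key}, $P_t^{-1}\sharp P_s=M^{s-t}$, holds for all $s,t$. Exchanging the roles of $s$ and $t$, it also gives $P_s^{-1}\sharp P_t=M^{t-s}$. By the definition of the near order, for $t<s$ we therefore have
\[
P_s\preceq P_t \iff M^{t-s}\ge I \iff M^{-(s-t)}\ge I \iff M\le I,
\]
where the last step uses $s-t>0$ and the functional calculus: the eigenvalues of $M^{-(s-t)}$ are at least $1$ exactly when those of $M$ are at most $1$. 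This is independent of the particular pair $t<s$, which gives (1)$\Leftrightarrow$(3)$\Leftrightarrow$(4) at once.

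For (2), take $t=0$ and $s=1$. Since $P_0=A$ and $P_1=D$, the identity gives $D^{-1}\sharp A=M^{-1}$. Hence $D\preceq A$ if and only if $M^{-1}\ge I$, that is, $M\le I$. Alternatively, one can combine $A^{-1}\sharp D=M$ with the symmetry $X^{-1}\sharp Y=(Y^{-1}\sharp X)^{-1}$, which follows from $(X\sharp Y)^{-1}=X^{-1}\sharp Y^{-1}$ and the symmetry of $\sharp$. Either way (2) is the case $t=0$, $s=1$ of (4), so it is equivalent to the others.

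It remains to justify the reformulation in (1). Write
\[
M=A^{-1}\star B=A^{-1/2}(B^{1/2}AB^{1/2})^{1/2}A^{-1/2}.
\]
Congruence by $A^{1/2}$ preserves the L\"owner order, so $M\le I$ is equivalent to $(B^{1/2}AB^{1/2})^{1/2}\le A$. The closing sentence of the statement is just a restatement of (1)$\Leftrightarrow$(3).

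I expect no real obstacle. The only point needing care is applying the Riccati identity with the indices in the correct order. One must also check that the inequality $M^{-(s-t)}\ge I$ reduces to $M\le I$ for every positive exponent, not merely for one.
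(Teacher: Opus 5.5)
Your proposal is correct and follows essentially the same argument as the paper: both apply the identity $P_s^{-1}\sharp P_t=M^{t-s}$ with $M=A^{-1}\star B$, reduce $P_s\preceq P_t$ for $t<s$ to $M\le I$ via the functional calculus, and obtain (2) as the case $t=0$, $s=1$. Your explicit congruence check of the reformulation $(B^{1/2}AB^{1/2})^{1/2}\le A$ is a small detail the paper leaves implicit.
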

	
	\begin{proof}
		With $M=A^{-1}\star B$ and $P_r=M^rAM^r$, equation
		\eqref{eq:near-spectral-key} gives, for $t<s$,
		\[
		P_s\preceq P_t
		\iff P_s^{-1}\sharp P_t=M^{t-s}\ge I
		\iff M\le I.
		\]
		Since $P_0=A$ and $P_1=D$, this also gives $D\preceq A\iff M\le I$.
	\end{proof}
	
	Scaling gives a two-sided comparison even when neither
	\(A^{-1}\star B\ge I\) nor
	\(A^{-1}\star B\le I\).
	
	\begin{corollary}\label{cor:near-spectral-pinching}
		Let \(A,B\in\mathbb P_n\), and let
		\[
		\lambda_{\min}
		=
		\lambda_{\min}(A^{-1}\star B),
		\qquad
		\lambda_{\max}
		=
		\lambda_{\max}(A^{-1}\star B).
		\]
		Then, for all \(t,s\in\mathbb R\) with \(t<s\),
		\begin{equation}\label{eq:near-spectral-pinching}
			\lambda_{\min}^{\,2s-2t}
			\bigl(A\widetilde{\natural}_tB\bigr)
			\preceq
			A\widetilde{\natural}_sB
			\preceq
			\lambda_{\max}^{\,2s-2t}
			\bigl(A\widetilde{\natural}_tB\bigr).
		\end{equation}
	\end{corollary}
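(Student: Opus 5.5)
The plan is to reduce both inequalities to the key identity \eqref{eq:near-spectral-key}, namely $P_t^{-1}\sharp P_s=M^{s-t}$. Here $M=A^{-1}\star B$ and $P_r=A\widetilde{\natural}_rB=M^rAM^r$. The second ingredient is the homogeneity of the geometric mean under scalar multiples: for $c>0$ and $X,Y\in\mathbb P_n$,
\[
(cX)^{-1}\sharp Y=c^{-1/2}\,(X^{-1}\sharp Y),
\qquad
X^{-1}\sharp(cY)=c^{1/2}(X^{-1}\sharp Y).
\]
Both follow directly from the formula $X^{-1}\sharp Y=X^{-1/2}(X^{1/2}YX^{1/2})^{1/2}X^{-1/2}$.

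For the left inequality, set $c=\lambda_{\min}^{2s-2t}$. By definition of the near order, $cP_t\preceq P_s$ means $(cP_t)^{-1}\sharp P_s\ge I$. By homogeneity and \eqref{eq:near-spectral-key}, this is equivalent to
\[
\lambda_{\min}^{-(s-t)}M^{s-t}\ge I.
\]
Since $s-t>0$, the spectrum of $M^{s-t}$ lies in $[\lambda_{\min}^{s-t},\lambda_{\max}^{s-t}]$. Hence $M^{s-t}\ge\lambda_{\min}^{s-t}I$, which is exactly what is needed.

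For the right inequality, set $d=\lambda_{\max}^{2s-2t}$. Then $P_s\preceq dP_t$ means $P_s^{-1}\sharp(dP_t)\ge I$. By homogeneity this equals $d^{1/2}\,(P_s^{-1}\sharp P_t)$. Applying \eqref{eq:near-spectral-key} with the roles of $s$ and $t$ swapped gives $P_s^{-1}\sharp P_t=M^{t-s}$. The condition thus becomes
\[
\lambda_{\max}^{s-t}M^{-(s-t)}\ge I,
\]
and this holds because $M^{-(s-t)}\ge\lambda_{\max}^{-(s-t)}I$.

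An alternative route is to write $cP_t=(c^{1/2}I)P_t(c^{1/2}I)$ and $P_s=M^{s-t}P_tM^{s-t}$. One then applies Lemma~\ref{lem:near-order-congruence} with the commuting pair $c^{1/2}I$ and $M^{s-t}$. This reduces the claim to the L\"owner inequality $\lambda_{\min}^{s-t}I\le M^{s-t}$, and the upper bound is handled the same way.

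There is no real obstacle. The only points needing care are checking the scalar homogeneity exponents of $\sharp$, so that the factor $2s-2t$ comes out correctly, and using $s>t$ so that the power $M^{s-t}$ preserves the eigenvalue bounds in the stated direction.
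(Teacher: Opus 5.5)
Your argument is correct and rests on the same mechanism as the paper's proof, but it is packaged differently. The paper rescales the input $B$ rather than the point $P_t$. It notes that $A^{-1}\star(\lambda_{\min}^{-2}B)=\lambda_{\min}^{-1}(A^{-1}\star B)\ge I$ and $A^{-1}\star(\lambda_{\max}^{-2}B)\le I$. It then applies the monotonicity Theorems~\ref{thm:near-order-increasing} and~\ref{thm:near-order-decreasing} to these rescaled pairs and undoes the scaling with $A\widetilde{\natural}_r(cB)=c^r(A\widetilde{\natural}_rB)$.

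Those two theorems are themselves proved from \eqref{eq:near-spectral-key}, so the routes are essentially equivalent. Your version is a bit more self-contained. It avoids the curve's homogeneity in $B$, and it avoids the fact that $X\preceq Y$ iff $cX\preceq cY$, which the paper uses without stating it. The paper's version instead shows the corollary as a formal consequence of the two monotonicity characterizations.
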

	
	\begin{proof}
		By homogeneity,
		\[
		A^{-1}\star(\lambda_{\min}^{-2}B)
		=
		\lambda_{\min}^{-1}(A^{-1}\star B)
		\ge I
		\]
		and
		\[
		A^{-1}\star(\lambda_{\max}^{-2}B)
		=
		\lambda_{\max}^{-1}(A^{-1}\star B)
		\le I.
		\]
		Hence Theorems~\ref{thm:near-order-increasing} and \ref{thm:near-order-decreasing}, together with
		\[
		A\widetilde{\natural}_r(cB)
		=
		c^r(A\widetilde{\natural}_rB),
		\]
		give
		\[
		\lambda_{\min}^{2s-2t}
		\bigl(A\widetilde{\natural}_tB\bigr)
		\preceq
		A\widetilde{\natural}_sB
		\preceq
		\lambda_{\max}^{2s-2t}
		\bigl(A\widetilde{\natural}_tB\bigr).
		\qedhere\]
	\end{proof}
	
	In particular, if \(A^{-1}\star B\ge I\), then for \(0\le t\le1\),
	\[
	A
	\preceq
	A\widetilde{\natural}_tB
	\preceq
	A\widetilde{\natural}_1B.
	\]
	If \(A^{-1}\star B\le I\), the inequalities are reversed.
	
	Under the decreasing-curve hypothesis, the comparison at \(t=1\) yields
	the following weak log-majorization bounds.
	
	\begin{theorem}\label{thm:wlog-between}
		Let \(A,B\in\mathbb P_n\) and \(t\in[0,1]\). If
		$
		A^{-1}\star B\le I,
		$
		then
		\[
		B
		\prec_{w\log}
		A\widetilde{\natural}_tB
		\prec_{w\log}
		A.
		\]
	\end{theorem}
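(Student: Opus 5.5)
The plan is to chain three facts already in the paper: the near-order monotonicity of the curve (Theorem~\ref{thm:near-order-decreasing}), the hierarchy \eqref{eq:order-hierarchy}, and the log-majorization $B\prec_{\log}D$ (Theorem~\ref{thm:point-one-logmajorization}). Throughout, write $P_r=A\widetilde{\natural}_rB$ and $D=P_1$.

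First I would use the hypothesis $A^{-1}\star B\le I$, which is condition (1) of Theorem~\ref{thm:near-order-decreasing}. That theorem gives $P_s\preceq P_{t'}$ whenever $t'<s$.
\begin{itemize}
\item For $0<t<1$, take $(t',s)=(0,t)$ to get $P_t\preceq P_0=A$.
\item For $0<t<1$, take $(t',s)=(t,1)$ to get $D=P_1\preceq P_t$.
\item At the endpoints $t=0$ and $t=1$, one of these two relations is trivial, and the other is $D\preceq A$. This is condition (2) of the same theorem.
\end{itemize}
Hence
\[
D\preceq A\widetilde{\natural}_tB\preceq A,\qquad 0\le t\le 1,
\]
which is also the reversed form of the remark following Corollary~\ref{cor:near-spectral-pinching}.

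Next, the implication chain \eqref{eq:order-hierarchy} converts near-order comparisons into eigenvalue entrywise comparisons, and then into weak log-majorization. This yields $D\prec_{w\log}P_t\prec_{w\log}A$.

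Finally, Theorem~\ref{thm:point-one-logmajorization} gives $B\prec_{\log}D$, and in particular $B\prec_{w\log}D$. Weak log-majorization is transitive, since it is defined by the partial-product inequalities for each $k$. Therefore $B\prec_{w\log}D\prec_{w\log}P_t$, giving $B\prec_{w\log}A\widetilde{\natural}_tB\prec_{w\log}A$.

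There is no real obstacle; the only care needed is handling the endpoints $t=0,1$, where one of the near-order relations degenerates to an equality. Also note that only the weak form survives in general: $\det P_t=(\det A)^{1-t}(\det B)^t$ differs from $\det A$ unless $\det A=\det B$.
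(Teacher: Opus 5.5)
Your proposal is correct and follows essentially the same route as the paper: near-order monotonicity from Theorem~\ref{thm:near-order-decreasing}, conversion through \eqref{eq:order-hierarchy}, then $B\prec_{\log}D$ from Theorem~\ref{thm:point-one-logmajorization} and transitivity of $\prec_{w\log}$. The only cosmetic difference is in how $D\preceq A\widetilde{\natural}_tB$ is obtained. The paper writes $D=C^{1-t}(A\widetilde{\natural}_tB)C^{1-t}$ with $C=A^{-1}\star B$ and applies Lemma~\ref{lem:near-order-congruence} using $C^{1-t}\le I$; you read the same relation off part (3) of Theorem~\ref{thm:near-order-decreasing} for the pair $(t,1)$, which is equivalent.
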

	
	\begin{proof}
		Set
		$
		C:=A^{-1}\star B
		$ and $
		D:=CAC=A\widetilde{\natural}_1B.
		$
		By Theorem~\ref{thm:near-order-decreasing},
		$
		A\widetilde{\natural}_tB\preceq A,
		$
		and hence, by \eqref{eq:order-hierarchy},
		$
		A\widetilde{\natural}_tB\prec_{w\log}A.
		$  Moreover,
		\[
		D
		=
		C^{1-t}
		\bigl(A\widetilde{\natural}_tB\bigr)
		C^{1-t}.
		\]
		Since \(C\le I\) and \(0\le t\le1\),
		$
		C^{1-t}\le I.
		$
		Lemma~\ref{lem:near-order-congruence} therefore gives
		\[
		D\preceq A\widetilde{\natural}_tB,\quad
		\text{and consequently}\quad
		D\prec_{w\log}A\widetilde{\natural}_tB.
		\]
        
		\noindent By Theorem~\ref{thm:point-one-logmajorization},
		$
		B\prec_{\log}D.
		$
		Transitivity yields
		\[
		B
		\prec_{w\log}
		A\widetilde{\natural}_tB
		\prec_{w\log}
		A.
		\qedhere\]
	\end{proof}

	The identity \eqref{eq:near-spectral-key} also yields a constant-speed property for the semi-metric 
	\[
	d(X,Y)
	:=
	2\|\log(X^{-1}\sharp Y)\|.
	\]
	
	\begin{theorem}\label{thm:near-spectral-constant-speed}
		Let \(A,B\in\mathbb P_n\), and set
		$
		P_t:=A\widetilde{\natural}_tB,
	~~t\in\mathbb R.
		$
		Then
		\[
		d(P_s,P_t)
		=
		|t-s|\,d(P_0,P_1),
		\qquad s,t\in\mathbb R.
		\]
		Thus \(t\mapsto P_t\) has constant speed relative to \(d\),
		and its restriction to \([0,1]\) joins
		\[
		A=P_0
		\quad\text{and}\quad
		P_1=A^{-1}\star_1B.
		\]
	\end{theorem}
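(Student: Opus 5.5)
The plan is to reduce everything to the key identity \eqref{eq:near-spectral-key} from the proof of Theorem~\ref{thm:near-order-increasing}. Set $M:=A^{-1}\star B\in\mathbb P_n$, so that $P_r=M^rAM^r$ for every $r\in\mathbb R$. For arbitrary $s,t\in\mathbb R$ we have $M^{t-s}P_sM^{t-s}=P_t$. Lemma~\ref{lem:riccati-geometric}, applied with the positive definite matrix $Y=M^{t-s}$, then gives
\[
P_s^{-1}\sharp P_t=M^{t-s}.
\]
This requires no ordering between $s$ and $t$, because the Riccati characterization holds for any positive definite $Y$.

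Next I would compute the logarithm by functional calculus. Since $M$ is positive definite, $\log(M^{t-s})=(t-s)\log M$. Hence
\[
d(P_s,P_t)=2\|(t-s)\log M\|=2|t-s|\,\|\log M\|.
\]
Specializing to $s=0$, $t=1$ gives $P_0^{-1}\sharp P_1=A^{-1}\sharp D=M$, so $d(P_0,P_1)=2\|\log M\|$. Combining the two displays yields $d(P_s,P_t)=|t-s|\,d(P_0,P_1)$.

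The endpoint statements are already recorded. The identity $P_0=M^0AM^0=A$ is immediate. The identity $P_1=D=A^{-1}\star_1B$ is part (1) of Theorem~\ref{thm:near-spectral-basic}.

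I do not expect a genuine obstacle here. The only point needing care is that the functional-calculus identity $\log(M^{r})=r\log M$ holds for all real $r$, including negative $r$, when $s>t$. This is clear from the spectral decomposition of $M$.
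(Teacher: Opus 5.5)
Your proposal is correct and matches the paper's proof: both set $M=A^{-1}\star B$, use the Riccati-based identity $P_s^{-1}\sharp P_t=M^{t-s}$, and conclude from $\log(M^{t-s})=(t-s)\log M$ together with homogeneity of the norm. Your remark that the identity needs no ordering of $s,t$ (since the Riccati lemma applies to any positive definite $Y$) is correct, and is slightly more explicit than the paper.
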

	
	\begin{proof}
		Let
		$
		M:=A^{-1}\star B.
		$
		By \eqref{eq:near-spectral-key},
		$
		P_s^{-1}\sharp P_t=M^{t-s}.
		$
		Hence
		\begin{align*}
			d(P_s,P_t)
			&=
			2\|\log(M^{t-s})\|
			=
			2|t-s|\|\log M\|
			=
			|t-s|\,d(P_0,P_1).
		\qedhere
        \end{align*}
	\end{proof}
	
	We next record exact spectral interpolation identities along this curve.
	
	\begin{theorem}\label{thm:spectral-affine-parameter}
		Let \(A,B\in\mathbb P_n\) and \(s,u,t\in\mathbb R\). Then
		\begin{equation} \label{eq:spectral-composition}
			\bigl(A\widetilde{\natural}_sB\bigr)
			\natural_t
			\bigl(A\widetilde{\natural}_uB\bigr)
			=
			A\widetilde{\natural}_{(1-t)s+tu}B.
		\end{equation}
	\end{theorem}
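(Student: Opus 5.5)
Set $M:=A^{-1}\star B$ and $P_r:=A\widetilde{\natural}_rB=M^rAM^r$ for $r\in\R$. The plan is to compute the left side of \eqref{eq:spectral-composition} directly from the definition of $\natural_t$ in \eqref{spectral geometric mean}:
\[
P_s\natural_tP_u=(P_s^{-1}\sharp P_u)^t\,P_s\,(P_s^{-1}\sharp P_u)^t .
\]
The key input is the identity \eqref{eq:near-spectral-key} from the proof of Theorem~\ref{thm:near-order-increasing}. It gives $P_s^{-1}\sharp P_u=M^{u-s}$ for all real $s,u$, with no ordering restriction on $s$ and $u$. That identity was derived from Lemma~\ref{lem:riccati-geometric} using only $M^{u-s}P_sM^{u-s}=P_u$, so it holds for every pair of parameters.

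Substituting, I get $(P_s^{-1}\sharp P_u)^t=(M^{u-s})^t=M^{t(u-s)}$. This uses functional calculus for the positive definite matrix $M$, valid for any real $t$. Therefore
\[
P_s\natural_tP_u=M^{t(u-s)}M^sAM^sM^{t(u-s)}=M^{(1-t)s+tu}AM^{(1-t)s+tu}=P_{(1-t)s+tu},
\]
because powers of $M$ commute and add exponents. This is exactly \eqref{eq:spectral-composition}.

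There is no real obstacle here. The only point that needs care is confirming that \eqref{eq:near-spectral-key} and the definition of $\natural_t$ are both valid for arbitrary real parameters, including $u<s$ and $t\notin[0,1]$. Both hold, since everything is expressed through real powers of a single positive definite matrix $M$. As a sanity check, setting $s=0$ and $u=1$ recovers \eqref{eq:near-spectral-via-spectral}.
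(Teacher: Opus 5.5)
Your proposal is correct and follows essentially the same route as the paper's proof: both take $M=A^{-1}\star B$ and $P_r=M^rAM^r$, use $P_s^{-1}\sharp P_u=M^{u-s}$ from the Riccati characterization, and combine exponents of $M$ to obtain $P_{(1-t)s+tu}$. Your remark that this holds for all real $s,u,t$, since only real powers of the single matrix $M$ appear, is the right point to check.
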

	
	\begin{proof}
		Set
		$
		M:=A^{-1}\star B,$
		and $
		P_r:=M^rAM^r.
		$
		Since
		\[
		P_s^{-1}\sharp P_u=M^{u-s},
		\]
		we obtain
		\begin{align*}
			P_s\natural_tP_u
			&=
			(M^{u-s})^tP_s(M^{u-s})^t=
			M^{s+t(u-s)}A
			M^{s+t(u-s)}
			=
			P_{(1-t)s+tu}.
			\qedhere 
		\end{align*}
	\end{proof}
	
	Setting $s=0$ in \eqref{eq:spectral-composition} gives
	\begin{equation}\label{eq:spectral-composition-base}
		A\natural_t
		\bigl(A\widetilde{\natural}_u B\bigr)
		=
		A\widetilde{\natural}_{tu}B.
	\end{equation}
	The following composition identity gives another characterization of commutativity.
	
	\begin{theorem}\label{thm:near-spectral-composition}
		Let $A,B\in\mathbb P_n$. The following are equivalent:
		\begin{enumerate}
			\item $AB=BA$.
			\item For some $s,t\in\mathbb R\setminus\{0\}$,
			\begin{equation}\label{eq:near-spectral-composition}
				A\widetilde{\natural}_t
				\bigl(A\widetilde{\natural}_sB\bigr)
				=
				A\widetilde{\natural}_{ts}B.
			\end{equation}
			\item Equation \eqref{eq:near-spectral-composition} holds for all $s,t\in\mathbb R$.
		\end{enumerate}
	\end{theorem}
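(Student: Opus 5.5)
We prove the cycle of implications \((1)\Rightarrow(3)\Rightarrow(2)\Rightarrow(1)\). The main tool is the identity \eqref{eq:spectral-composition-base}, which turns the composition condition into a statement about the spectral geometric curve. The first two implications are routine.

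\((1)\Rightarrow(3)\): If \(AB=BA\), then by Theorem~\ref{thm:near-spectral-basic}(2) we have \(A\widetilde{\natural}_sB=A^{1-s}B^s\). This matrix commutes with \(A\), so applying Theorem~\ref{thm:near-spectral-basic}(2) again gives
\[
A\widetilde{\natural}_t(A^{1-s}B^s)=A^{1-t}(A^{1-s}B^s)^t=A^{1-ts}B^{ts}=A\widetilde{\natural}_{ts}B.
\]
\((3)\Rightarrow(2)\) is trivial.

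For \((2)\Rightarrow(1)\), fix \(s,t\neq0\) satisfying \eqref{eq:near-spectral-composition} and set \(P:=A\widetilde{\natural}_sB\). We proceed in three steps.

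\emph{Step 1: rewrite both sides with \(\natural_t\).} By \eqref{eq:spectral-composition-base}, the right-hand side is \(A\widetilde{\natural}_{ts}B=A\natural_tP\). By \eqref{eq:near-spectral-via-spectral}, the left-hand side is \(A\widetilde{\natural}_tP=A\natural_tD'\), where \(D':=A\widetilde{\natural}_1P\). Thus the hypothesis reads
\[
A\natural_tD'=A\natural_tP.
\]

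\emph{Step 2: injectivity of \(X\mapsto A\natural_tX\) for \(t\neq0\).} Put \(Y=A^{-1}\sharp X\), so that \(A\natural_tX=Y^tAY^t\). Lemma~\ref{lem:riccati-geometric} gives \(A^{-1}\sharp(A\natural_tX)=Y^t\). Hence \(A\natural_tX\) determines \(Y^t\), and therefore \(Y\), because the positive \(t\)-th root is unique. It then determines \(X=YAY\), again by Lemma~\ref{lem:riccati-geometric}. Consequently \(D'=P\), that is, \(A\widetilde{\natural}_1P=P\). The last clause of Theorem~\ref{thm:near-spectral-basic}(1) then yields \(AP=PA\).

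\emph{Step 3: pass from \(AP=PA\) to \(AB=BA\).} Let \(M=A^{-1}\star B\), so that \(P=M^sAM^s\). By Theorem~\ref{thm:near-spectral-riccati}, \(A^{-1}\sharp P=M^s\). Since \(A^{-1}\) and \(P\) commute, \(A^{-1}\sharp P=A^{-1/2}P^{1/2}\), which commutes with \(A\). So \(M^s\) commutes with \(A\), and since \(s\neq0\), functional calculus gives \(MA=AM\). From here the last part of the proof of Theorem~\ref{thm:near-spectral-geometry-commutativity} applies verbatim:
\[
(B^{1/2}AB^{1/2})^{1/2}=AM \quad\Longrightarrow\quad B^{1/2}AB^{1/2}=A^2M^2.
\]
By uniqueness of the positive solution of \(XAX=A^2M^2\), we get \(B^{1/2}=A^{1/2}M\), which commutes with \(A\). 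Hence \(AB=BA\).

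The main obstacle is Step 2, the injectivity of \(X\mapsto A\natural_tX\) for \(t\neq0\). The hypothesis \(t\neq0\) is used exactly there, and \(s\neq0\) is used in Step 3. Once Step 2 is in place, everything reduces to results already established in the paper.
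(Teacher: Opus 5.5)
Your proof is correct and follows the paper's route: you rewrite both sides as $A\natural_t(\cdot)$, use injectivity of $X\mapsto A\natural_tX$ for $t\neq0$ to get $A\widetilde{\natural}_1P=P$, and conclude $AP=PA$ from Theorem~\ref{thm:near-spectral-basic}(1). The only difference is in your Step~3, where you re-derive the passage from $AP=PA$ to $AB=BA$ by hand; the paper instead applies Theorem~\ref{thm:near-spectral-geometry-commutativity} directly to the distinct parameters $0$ and $s$, using $A=A\widetilde{\natural}_0B$.
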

	
	\begin{proof}
		If $AB=BA$, then $A\widetilde{\natural}_rB=A^{1-r}B^r$ for every
		$r\in\mathbb R$, so (3) follows directly.
		
		Conversely, suppose \eqref{eq:near-spectral-composition} holds for some nonzero
		$s,t$. By \eqref{eq:near-spectral-via-spectral} and
		\eqref{eq:spectral-composition-base},
		\[
		A\natural_t\!\left(A^{-1}\star_1
		(A\widetilde{\natural}_sB)\right)
		=
		A\natural_t(A\widetilde{\natural}_sB).
		\]
		For $t\ne0$, the map $X\mapsto A\natural_tX$ is injective. Indeed,
		if $A\natural_tX=A\natural_tY$, then the Riccati characterization gives
		\[
		(A^{-1}\sharp X)^t=(A^{-1}\sharp Y)^t.
		\]
		The positive power map is injective for $t\ne0$, and Lemma~\ref{lem:riccati-geometric}
		then yields $X=Y$. Hence
		\[
		A^{-1}\star_1(A\widetilde{\natural}_sB)
		=
		A\widetilde{\natural}_sB.
		\]
		By Theorem~\ref{thm:near-spectral-basic}(1), $A$ commutes with
		$A\widetilde{\natural}_sB$. Since $s\ne0$, Theorem~\ref{thm:near-spectral-geometry-commutativity}
		then implies $AB=BA$.
	\end{proof}

	For \(2\times2\) matrices with determinant one, the near spectral
	geometric curve admits an explicit formula.
	
	\begin{proposition}\label{prop:P2-formula}
		Let \(A,B\in\mathbb P_2\) satisfy
		$
		\det A=\det B=1.
		$
		Then, for every \(t\in\mathbb R\),
		\[
		A\widetilde{\natural}_tB
		=
		\frac{Q^tAQ^t}
		{\bigl(2+\tr(AB)\bigr)^t},
		\qquad 
		Q
		:=
		A^{-1/2}
		\bigl(I+B^{1/2}AB^{1/2}\bigr)
		A^{-1/2}.
		\]
	\end{proposition}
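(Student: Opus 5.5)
The plan is to reduce everything to an explicit formula for $M:=A^{-1}\star B=A^{-1/2}(B^{1/2}AB^{1/2})^{1/2}A^{-1/2}$. Once $M$ is in hand, the definition \eqref{Q1} gives $A\widetilde{\natural}_tB=M^tAM^t$. The claim therefore follows if we show
\[
M=\frac{Q}{\sqrt{2+\tr(AB)}}.
\]
Indeed, the scalar factor then commutes with everything, and $M^t=Q^t/(2+\tr(AB))^{t/2}$. Conjugating $A$ by $M^t$ on both sides yields the factor $(2+\tr(AB))^{-t}$.

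The key step is the square root of the $2\times 2$ positive definite matrix $N:=B^{1/2}AB^{1/2}$. We have $\det N=\det A\det B=1$ and $\tr N=\tr(AB)$. For any $N\in\mathbb P_2$ with $\det N=1$, the standard formula gives
\[
N^{1/2}=\frac{N+I}{\sqrt{\tr N+2}}.
\]
To verify it, let $S=N^{1/2}$. Then $\det S=1$, and Cayley--Hamilton gives $S^2-(\tr S)S+I=0$, so $S=(N+I)/\tr S$. Taking the trace of $S^2=N$ gives $(\tr S)^2-2\det S=\tr N$, so $\tr S=\sqrt{\tr N+2}$, where the positive root is chosen because $S>0$. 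Substituting,
\[
(B^{1/2}AB^{1/2})^{1/2}=\frac{I+B^{1/2}AB^{1/2}}{\sqrt{2+\tr(AB)}}.
\]
Congruence by $A^{-1/2}$ then gives $M=Q/\sqrt{2+\tr(AB)}$.

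For the real power, note that $Q$ is positive definite: it is a congruence of $I+N>0$. So $Q^t$ is defined by functional calculus, and $(cQ)^t=c^tQ^t$ holds for $c>0$. The only place needing care is the sign of $\tr S$, and positivity of $S$ settles it, so I expect no real obstacle; the rest is bookkeeping.
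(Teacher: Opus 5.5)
Your proof is correct and follows the paper's approach: both write $A^{-1}\star B=A^{-1/2}(B^{1/2}AB^{1/2})^{1/2}A^{-1/2}$, apply the $2\times2$ unit-determinant square-root formula to $B^{1/2}AB^{1/2}$, and pull out the scalar factor from $M^t$. The only cosmetic difference is that you derive the normalization $\sqrt{\tr N+2}$ via Cayley--Hamilton, whereas the paper cites Bhatia's form $\sqrt{\det(N+I)}$ and then computes $\det(I+AB)=2+\tr(AB)$.
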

	
	\begin{proof}
		Set
		$
		P:=B^{1/2}AB^{1/2}.
		$
		Then \(P\in\mathbb P_2\) and \(\det P=1\). By
		\cite[Proposition 4.1.12]{bhatia07},
		\[
		P^{1/2}
		=
		\frac{P+I}{\sqrt{\det(P+I)}}.
		\]
		Hence
		\[
		A^{-1}\star B
		=
		\frac{
			A^{-1/2}
			(I+B^{1/2}AB^{1/2})
			A^{-1/2}
		}{
			\sqrt{\det(P+I)}
		}
		=
		\frac{Q}{\sqrt{\det(P+I)}}.
		\]
		Therefore,
		\[
		A\widetilde{\natural}_tB
		=
		\frac{Q^tAQ^t}{\det(P+I)^t}.
		\]
		Since
		\[
		\det(P+I)
		=
		\det(I+AB)
		=
		1+\tr(AB)+\det(AB)
		=
		2+\tr(AB),
		\]
		the assertion follows.
	\end{proof}
	
	The representation as a spectral geometric curve transfers known
	L\"owner-order bounds to the present setting.
	
	\begin{theorem}\label{thm:loewner-bounds}
		Let \(A,B\in\mathbb P_n\) and \(0<t<1\). Then $A\widetilde{\natural}_tB$ is bounded below and above by 
		\[
		2^{1+t}
		\Bigl(
		A+
		A^{1/2}B^{-1/2}A^{-1}B^{-1/2}A^{1/2}
		\Bigr)^{-t}
		-A^{-1}
		\le
		A\widetilde{\natural}_tB
		\]
		and
		\[
		A\widetilde{\natural}_tB
		\le
		\Bigl[
		2^{1+t}
		\Bigl(
		A^{-1}
		+
		A^{-1/2}B^{1/2}AB^{1/2}A^{-1/2}
		\Bigr)^{-t}
		-A
		\Bigr]^{-1}.
		\]
	\end{theorem}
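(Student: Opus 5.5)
The plan is to reduce everything to the representation $A\widetilde{\natural}_tB=A\natural_tD$ from Theorem~\ref{thm:near-spectral-basic}(1), where $D=A^{-1/2}B^{1/2}AB^{1/2}A^{-1/2}$. First I identify the matrices in the statement. A direct inversion gives
\[
D^{-1}=A^{1/2}B^{-1/2}A^{-1}B^{-1/2}A^{1/2},
\]
so the lower bound reads
\[
2^{1+t}(A+D^{-1})^{-t}-A^{-1}\le A\natural_tD,
\]
and the upper bound reads
\[
A\natural_tD\le\bigl[2^{1+t}(A^{-1}+D)^{-t}-A\bigr]^{-1}.
\]
Thus both are statements about the spectral geometric curve from $A$ to $D$, and $B$ no longer appears.

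For the lower bound, write $M:=A^{-1}\sharp D$ and $Y:=M^t$, so that $A\natural_tD=YAY$. By Lemma~\ref{lem:riccati-geometric}, $A^{-1}\sharp(YAY)=Y$. The arithmetic--geometric mean inequality $X+Z\ge 2\,X\sharp Z$ then gives
\[
A\natural_tD+A^{-1}=YAY+A^{-1}\ge 2\,(A^{-1}\sharp YAY)=2M^t.
\]
Next, the harmonic--geometric mean inequality for $A^{-1}$ and $D$ gives
\[
M=A^{-1}\sharp D\ge 2\,(A+D^{-1})^{-1}.
\]
Since $0<t<1$, the L\"owner--Heinz inequality gives $M^t\ge 2^t(A+D^{-1})^{-t}$. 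Combining the two displays yields $A\natural_tD+A^{-1}\ge 2^{1+t}(A+D^{-1})^{-t}$, which is the lower bound.

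For the upper bound, I apply the lower bound with $A,D$ replaced by $A^{-1},D^{-1}$. The inversion identity $(A^{-1}\natural_tD^{-1})^{-1}=A\natural_tD$ follows from the Riccati characterization, exactly as in Theorem~\ref{thm:near-spectral-basic}(5): indeed $A\sharp D^{-1}=(A^{-1}\sharp D)^{-1}$. This substitution gives
\[
2^{1+t}(A^{-1}+D)^{-t}-A\le (A\natural_tD)^{-1}.
\]
The right-hand side is positive definite. If the left-hand side is also positive definite, the order-reversing property of inversion on $\mathbb P_n$ gives the stated upper bound.

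The step I expect to be the main obstacle is this final inversion. The bracketed matrix $2^{1+t}(A^{-1}+D)^{-t}-A$ need not be positive definite, and then the inverse in the statement is problematic. The derivation of the lower bound itself is short; the care is needed in interpreting the upper bound. I would state the upper bound under the positivity of the bracket, noting that this is implicit in the statement, since the inverse of a non-positive Hermitian matrix bounding a positive definite matrix from above is otherwise meaningless.
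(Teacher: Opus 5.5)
Your proof is correct. It makes the same reduction as the paper: $A\widetilde{\natural}_tB=A\natural_tD$, together with the correct formula $D^{-1}=A^{1/2}B^{-1/2}A^{-1}B^{-1/2}A^{1/2}$. The difference is in the next step. The paper simply invokes Kim--Lee's Proposition~2.3 for the pair $(A,D)$. You instead prove that bound from scratch, using:
\begin{itemize}
\item the arithmetic--geometric mean inequality for $YAY$ and $A^{-1}$, combined with the Riccati identity $A^{-1}\sharp(YAY)=Y$;
\item the harmonic--geometric mean inequality $2(A+D^{-1})^{-1}\le A^{-1}\sharp D$;
\item the L\"owner--Heinz inequality, which is where $0<t<1$ enters;
\item the inversion symmetry $(A^{-1}\natural_tD^{-1})^{-1}=A\natural_tD$, for the upper bound.
\end{itemize}
The citation is shorter. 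Your version is self-contained and shows exactly which hypothesis each step uses.

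Your caution about the bracket in the upper bound is not just a matter of interpretation: without a positivity hypothesis the upper bound is false. Take $A=I$, $B=15I$, $t=\tfrac12$, so that $D=B$. Then $A\widetilde{\natural}B=\sqrt{15}\,I$, while
\[
2^{3/2}(A^{-1}+D)^{-1/2}-A=\bigl(\tfrac{\sqrt2}{2}-1\bigr)I
\]
is negative definite. The claimed bound would therefore read $\sqrt{15}\,I\le-(2+\sqrt2)\,I$.

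In general, $X\le Y$ with $X\in\mathbb P_n$ forces $Y\in\mathbb P_n$. So the upper bound can hold only when the bracket $2^{1+t}(A^{-1}+D)^{-t}-A$ is positive definite. When the bracket is invertible but not positive definite, the unconditional statement is false, not merely meaningless as you put it. Your added hypothesis is exactly the necessary and sufficient one, and your argument proves the upper bound under it. The lower bound holds unconditionally, as you show.
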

	
	\begin{proof}
		By \eqref{eq:near-spectral-via-spectral},
		$
		A\widetilde{\natural}_tB
		=
		A\natural_t D,$
		and
		$D=A^{-1}\star_1B.
		$
        
		Applying \cite[Proposition 2.3]{Kim2015} to the pair \((A,D)\) gives
		\[
		2^{1+t}(A+D^{-1})^{-t}-A^{-1}
		\le
		A\natural_t D
		\le
		\bigl[
		2^{1+t}(A^{-1}+D)^{-t}-A
		\bigr]^{-1}.
		\]
		Since
		\[
		D=A^{-1/2}B^{1/2}AB^{1/2}A^{-1/2},
		\qquad
		D^{-1}=A^{1/2}B^{-1/2}A^{-1}B^{-1/2}A^{1/2},
		\]
		substitution gives the desired inequalities.
	\end{proof}
	
	Finally, we present a Lie--Trotter type theorem for the near
	spectral geometric curve.
	
	\begin{theorem}\label{thm:near-spectral-LT}
		Let \(A,B\in\mathbb P_n\) and \(t\in\mathbb R\). Then
		\[
		\lim_{s\to0}
		\bigl(
		A^s\widetilde{\natural}_tB^s
		\bigr)^{1/s}
		=
		\exp\bigl((1-t)\log A+t\log B\bigr).
		\]
	\end{theorem}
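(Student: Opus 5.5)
The plan is to write $A^s\widetilde{\natural}_tB^s=M_s^{t}A^sM_s^{t}$ with $M_s:=A^{-s}\star B^s$, and to reduce the claim to a first-order expansion of $\log$ of this matrix at $s=0$. Concretely, I will show that
\[
\log\bigl(A^s\widetilde{\natural}_tB^s\bigr)=s\bigl((1-t)\log A+t\log B\bigr)+o(s),\qquad s\to0.
\]
Dividing by $s$ and exponentiating then gives the limit, since $(X_s)^{1/s}=\exp\bigl(\tfrac1s\log X_s\bigr)$ for $X_s\in\mathbb P_n$ and $\exp$ is continuous.

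First, expand $M_s$. By definition,
\[
M_s=A^{-s/2}\bigl(B^{s/2}A^{s}B^{s/2}\bigr)^{1/2}A^{-s/2}.
\]
Write $a=\log A$ and $b=\log B$. Then $B^{s/2}A^sB^{s/2}=I+s(a+b)+O(s^2)$, so its square root is $I+\tfrac{s}{2}(a+b)+O(s^2)$. It follows that
\[
M_s=\bigl(I-\tfrac s2a\bigr)\bigl(I+\tfrac s2(a+b)\bigr)\bigl(I-\tfrac s2a\bigr)+O(s^2)=I+\tfrac s2(b-a)+O(s^2).
\]
Since $M_s\to I$ and $M_s\in\mathbb P_n$, the matrix logarithm is analytic near $I$, and we get $\log M_s=\tfrac s2(b-a)+O(s^2)$. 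For fixed real $t$ this gives $M_s^t=\exp(t\log M_s)=I+\tfrac{st}{2}(b-a)+O(s^2)$.

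Next, multiply the three factors:
\[
M_s^tA^sM_s^t=\bigl(I+\tfrac{st}{2}(b-a)\bigr)\bigl(I+sa\bigr)\bigl(I+\tfrac{st}{2}(b-a)\bigr)+O(s^2)=I+s\bigl((1-t)a+tb\bigr)+O(s^2).
\]
Taking $\log$ near $I$ gives the claimed expansion, with the remainder actually $O(s^2)$. Hence $\tfrac1s\log(A^s\widetilde{\natural}_tB^s)\to(1-t)a+tb$, which proves the theorem. The argument works for $s\to0^\pm$ alike, because all expansions are analytic in $s$.

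The only point needing care is the uniformity of the $O(s^2)$ remainders through the square root and the real power $t$. I will handle it by noting that every map involved ($s\mapsto A^s$, the square root on $\mathbb P_n$, $\log$ near $I$, and $X\mapsto X^t=\exp(t\log X)$) is real-analytic, so each composite is a smooth function of $s$ whose value at $0$ is $I$ and whose first derivative is as computed.

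An alternative route is to use $A^s\widetilde{\natural}_tB^s=A^s\natural_tD_s$ from Theorem~\ref{thm:near-spectral-basic}(1), where $D_s=A^{-s/2}B^{s/2}A^sB^{s/2}A^{-s/2}=I+sb+O(s^2)$, and then invoke the known Lie--Trotter formula for $\natural_t$. That route would require a version of that formula valid when the second input depends on $s$, so the direct expansion above is cleaner.
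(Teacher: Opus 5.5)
Your proof is correct and follows the paper's argument. You write $A^s\widetilde{\natural}_tB^s=M_s^tA^sM_s^t$, expand $M_s^t$ and $A^s$ to first order, multiply out, and take $\log$. The only difference is that you obtain $\log M_s=\tfrac s2(\log B-\log A)+O(s^2)$ by direct Taylor expansion. The paper instead deduces $\log M_s=sL+o(s)$ from the Lie--Trotter formula for the near geometric mean, Theorem~\ref{thm:near-geometric-basic}(6), applied to $(A^{-1},B)$ with weight $\tfrac12$.
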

	
	\begin{proof}
		Set
		$
		X_s:=A^{-s}\star B^s.
		$
		By the Lie--Trotter formula for the near geometric mean applied to
		\((A^{-1},B)\) with weight \(1/2\),
		\[
		\lim_{s\to0}X_s^{1/s}
		=
		\exp\!\left(
		\frac12(\log B-\log A)
		\right).
		\]
		Let
		\[
		L:=\frac12(\log B-\log A).
		\]
		Then
		\[
		\frac1s\log X_s\longrightarrow L,
		\]
		and hence
		\[
		\log X_s=sL+o(s).
		\]
		Thus
		\[
		X_s^t
		=
		I+stL+o(s),
		\qquad
		A^s
		=
		I+s\log A+o(s).
		\]
		Since
		\[
		A^s\widetilde{\natural}_tB^s
		=
		X_s^tA^sX_s^t,
		\]
		we obtain
		\begin{align*}
			A^s\widetilde{\natural}_tB^s
			&=
			I+s\bigl(2tL+\log A\bigr)+o(s)\\
			&=
			I+s\bigl((1-t)\log A+t\log B\bigr)+o(s).
		\end{align*}
		Therefore,
		\[
		\frac1s
		\log
		\bigl(
		A^s\widetilde{\natural}_tB^s
		\bigr)
		\longrightarrow
		(1-t)\log A+t\log B,
		\]
		and exponentiating gives the result.
	\end{proof}
	
	\begin{remark}
		Although
		$
		A\widetilde{\natural}_1B\neq B
		$
		in general, Theorem~\ref{thm:near-spectral-LT} shows that the family
		\(A^s\widetilde{\natural}_tB^s\) has the same first-order logarithmic
		behavior as the log-Euclidean interpolation
		\[
		\exp\bigl((1-t)\log A+t\log B\bigr).
		\]
		Thus the near spectral geometric curve is infinitesimally log-Euclidean in
		the Lie--Trotter sense.
	\end{remark}

	\section{Log-majorization relations}
	
	We conclude with further log-majorization relations for the near spectral
	geometric curve. The first two results follow from Araki's log-majorization
	and Weyl's majorant theorem. Recall the following form of Araki's
	log-majorization
	\cite[Remark~2.2]{Hiai2024}: if \(S,T\in\mathbb P_n\) and
	\(0\le p\le1\), then
	\begin{equation}\label{eq:Hiai}
		S^{p/2}T^pS^{p/2}
		\prec_{\log}
		(S^{1/2}TS^{1/2})^p.
	\end{equation}
	Applying \eqref{eq:Hiai} to a point on that curve gives the following
	relation.
	
	\begin{proposition}\label{prop:power-logmajorization}
		Let \(A,B\in\mathbb P_n\), \(t\in\mathbb R\), and \(0\le p\le1\).
		Then
		\[
		(A^{-1}\star B)^{tp}
		A^p
		(A^{-1}\star B)^{tp}
		\prec_{\log}
		\bigl(A\widetilde{\natural}_tB\bigr)^p.
		\]
	\end{proposition}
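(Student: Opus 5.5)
Set $M:=A^{-1}\star B\in\mathbb P_n$, so that by definition \eqref{Q1} we have $A\widetilde{\natural}_tB=M^tAM^t$. The plan is to apply Araki's log-majorization \eqref{eq:Hiai} with the choice
\[
S:=M^{2t},\qquad T:=A,
\]
both of which lie in $\mathbb P_n$ for every real $t$, since $M$ is positive definite and real powers are defined by functional calculus.

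With this choice, $S^{1/2}=M^t$, so
\[
S^{1/2}TS^{1/2}=M^tAM^t=A\widetilde{\natural}_tB,
\]
and the right-hand side of \eqref{eq:Hiai} becomes $\bigl(A\widetilde{\natural}_tB\bigr)^p$. On the left-hand side, $S^{p/2}=(M^{2t})^{p/2}=M^{tp}$. This holds because powers of a single positive definite matrix compose multiplicatively: in a spectral decomposition $M=\sum_j\mu_jE_j$ one has $(M^{2t})^{p/2}=\sum_j(\mu_j^{2t})^{p/2}E_j=\sum_j\mu_j^{tp}E_j$. Hence $S^{p/2}T^pS^{p/2}=M^{tp}A^pM^{tp}$. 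Substituting $M=A^{-1}\star B$ gives exactly the claimed relation
\[
(A^{-1}\star B)^{tp}A^p(A^{-1}\star B)^{tp}\prec_{\log}\bigl(A\widetilde{\natural}_tB\bigr)^p,
\qquad 0\le p\le1.
\]

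There is no real obstacle here. The only points needing care are that the sign of $t$ is irrelevant, because $M^{2t}$ is positive definite for negative $t$ as well, and that the parameter restriction $0\le p\le1$ is inherited directly from \eqref{eq:Hiai}. The determinant equality required by $\prec_{\log}$ is part of \eqref{eq:Hiai}, but it can also be checked directly: both sides have determinant $(\det M)^{2tp}(\det A)^p$.
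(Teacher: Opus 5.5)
Your proposal is correct and matches the paper's proof: both apply Araki's log-majorization \eqref{eq:Hiai} with $S=M^{2t}$ and $T=A$, where $M=A^{-1}\star B$, and read off both sides directly. Your side remarks on the composition of powers, the irrelevance of the sign of $t$, and the determinant equality are accurate.
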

	
	\begin{proof}
		Set
		$
		M:=A^{-1}\star B.
		$
		Applying \eqref{eq:Hiai} with
		$S=M^{2t}$ and
		$T=A$ 
		gives
		\[
		M^{tp}A^pM^{tp}
		\prec_{\log}
		(M^tAM^t)^p
		=
		\bigl(A\widetilde{\natural}_tB\bigr)^p.
		\qedhere\]
	\end{proof}
	
	The next result follows directly from Weyl's majorant theorem.
	
	\begin{theorem}\label{thm:weyl-near-spectral}
		Let \(A,B\in\mathbb P_n\) and \(t\in\mathbb R\). Then
		\[
		(A^{-1}\star B)^{2t}
		\prec_{\log}
		A^{-1/2}
		\bigl(A\widetilde{\natural}_tB\bigr)
		A^{-1/2}.
		\]
	\end{theorem}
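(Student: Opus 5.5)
We write $M:=A^{-1}\star B$, so that $A\widetilde{\natural}_tB=M^tAM^t$. Then
\[
A^{-1/2}\bigl(A\widetilde{\natural}_tB\bigr)A^{-1/2}=A^{-1/2}M^tAM^tA^{-1/2}=Y^*Y,
\qquad Y:=A^{1/2}M^tA^{-1/2}.
\]
The plan is to mimic the proof of Theorem~\ref{thm:point-one-logmajorization}: find a matrix whose squared singular values give the right-hand side and whose squared moduli of eigenvalues give the left-hand side, and then apply Weyl's majorant theorem.

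The singular values satisfy $s_j(Y)^2=\lambda_j^\downarrow(Y^*Y)=\lambda_j^\downarrow\bigl(A^{-1/2}(A\widetilde{\natural}_tB)A^{-1/2}\bigr)$. On the other hand, $Y$ is similar to $M^t$, which is positive definite. Hence the eigenvalue moduli of $Y$ are exactly $\lambda_j^\downarrow(M)^t$, and their squares are $\lambda_j^\downarrow(M^{2t})$.

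Weyl's majorant theorem gives
\[
\prod_{j=1}^k|\lambda_j(Y)|^\downarrow\le\prod_{j=1}^k s_j(Y),\qquad k=1,\ldots,n,
\]
with equality for $k=n$, since both products then equal $|\det Y|$. Squaring these inequalities yields
\[
\prod_{j=1}^k\lambda_j^\downarrow(M^{2t})\le\prod_{j=1}^k\lambda_j^\downarrow\bigl(A^{-1/2}(A\widetilde{\natural}_tB)A^{-1/2}\bigr),
\]
with equality of determinants. This is exactly the claimed relation $(A^{-1}\star B)^{2t}\prec_{\log}A^{-1/2}\bigl(A\widetilde{\natural}_tB\bigr)A^{-1/2}$.

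As an independent check on the determinant equality, both sides have determinant $(\det M)^{2t}$: the left side trivially, and the right side because $\det(A^{-1/2}M^tAM^tA^{-1/2})=(\det M)^{2t}$.

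There is no real obstacle in this argument. The only point needing care is that for arbitrary real $t$ (including negative $t$), $M^t$ is still positive definite, so the eigenvalues of $Y$ are positive. This makes $|\lambda_j(Y)|^\downarrow=\lambda_j^\downarrow(M^t)$ valid with the correct ordering.
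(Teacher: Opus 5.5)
Your proof is correct and is essentially the paper's argument: the paper takes $X=A^{-1/2}M^tA^{1/2}$, which is just $Y^*$, so that $XX^*=Y^*Y$, and then applies Weyl's majorant theorem and squares exactly as you do. One small slip: for $t<0$ the eigenvalue moduli of $Y$ in decreasing order are $\lambda_j^\downarrow(M^t)$ rather than $\lambda_j^\downarrow(M)^t$, which you already state correctly in your final remark.
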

	
	\begin{proof}
		Set
		$
		M:=A^{-1}\star B
		$
		and
		$
		X:=A^{-1/2}M^tA^{1/2}.
		$
		Then \(X\) is similar to \(M^t\), and hence
		\[
		|\lambda(X)|^\downarrow
		=
		\lambda^\downarrow(M^t).
		\]
		Moreover,
		\[
		XX^*
		=
		A^{-1/2}M^tAM^tA^{-1/2}
		=
		A^{-1/2}
		\bigl(A\widetilde{\natural}_tB\bigr)
		A^{-1/2}.
		\]
		
		By Weyl's majorant theorem,
		$
		|\lambda(X)|
		\prec_{\log}
		s(X),
		$
		where \(s(X)\) denotes the vector of singular values of \(X\).
		Since
		\[
		s_j(X)^2=\lambda_j^\downarrow(XX^*),
		\]
		squaring the log-majorization gives
		$
		\lambda(M^{2t})
		\prec_{\log}
		\lambda(XX^*).
		$
		Therefore,
		\[
		(A^{-1}\star B)^{2t}
		\prec_{\log}
		A^{-1/2}
		\bigl(A\widetilde{\natural}_tB\bigr)
		A^{-1/2}.
		\qedhere\]
	\end{proof}

	To further investigate the relation between $A^{-1}\star_tB$ and $A\widetilde{\natural}_tB$, 
	we need  the following elementary norm inequality based on the Rayleigh--Ritz
	characterization of the largest eigenvalue 
	  \cite[Theorem 4.2.2]{HornJohnson2013}.
	
	\begin{lemma}\label{lem:congruence-norm}
		Let \(S,X\in\mathbb P_n\). If \(S\ge I\), then
		\(
		\|SXS\|\ge \|X\|.
		\)
	\end{lemma}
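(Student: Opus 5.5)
The plan is to use the Rayleigh--Ritz characterization $\|Y\|=\max_{\|x\|=1}\langle Yx,x\rangle$, valid for every $Y\in\mathbb P_n$, together with the fact that $S\ge I$ forces $S$ to expand vectors in norm. Since $S$ and $X$ are positive definite, $SXS\in\mathbb P_n$, so both $\|X\|=\lambda_{\max}(X)$ and $\|SXS\|=\lambda_{\max}(SXS)$ are given by Rayleigh quotients.

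The steps, in order, are as follows.

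First, choose a unit vector $u$ with $Xu=\lambda_{\max}(X)u$, so that $\langle Xu,u\rangle=\|X\|$.

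Second, since $S\ge I$ is invertible, set $y:=S^{-1}u$. Because $S\ge I$ implies $S^{-1}\le I$ and hence $S^{-2}\le I$, we get
\[
\|y\|^2=\langle S^{-2}u,u\rangle\le 1 .
\]

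Third, by Rayleigh--Ritz applied to $SXS$ (using $y\neq0$),
\[
\|SXS\|\ge \frac{\langle SXSy,y\rangle}{\|y\|^2}=\frac{\langle Xu,u\rangle}{\|y\|^2}\ge \langle Xu,u\rangle=\|X\|.
\]

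An alternative route avoids Rayleigh quotients: $SXS$ and $X^{1/2}S^2X^{1/2}$ have the same nonzero eigenvalues. Since $S^2\ge I$, we have $X^{1/2}S^2X^{1/2}\ge X$, and comparing largest eigenvalues gives the claim.

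No step here is a real obstacle. The only point needing care is that the test vector must be pulled back through $S^{-1}$ rather than pushed forward through $S$. Pulling back uses $S^{-2}\le I$, which follows from $S\ge I$ by operator monotonicity of inversion. It also guarantees that the normalizing factor $\|y\|^{-2}$ is at least $1$, which is what makes the final inequality go in the right direction.
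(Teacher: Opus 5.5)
Your argument is correct. Your ``alternative route'' is exactly the paper's proof: from $S^2\ge I$ it gets $X^{1/2}S^2X^{1/2}\ge X$, and then uses $\lambda_{\max}(SXS)=\lambda_{\max}(X^{1/2}S^2X^{1/2})\ge\lambda_{\max}(X)$.

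Your primary argument is a genuinely different, more hands-on route. You feed the single pulled-back vector $y=S^{-1}u$ into the Rayleigh quotient of $SXS$. This avoids $X^{1/2}$ and the fact that $YY^*$ and $Y^*Y$ (with $Y=SX^{1/2}$) share their eigenvalues. Everything reduces to $\langle SXSy,y\rangle=\langle Xu,u\rangle$ and $\|S^{-1}u\|\le\|u\|$.

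The trade-off is scope. As written, your version controls only the top eigenvalue. The paper's L\"owner comparison $X^{1/2}S^2X^{1/2}\ge X$ gives $\lambda_j^\downarrow(SXS)\ge\lambda_j^\downarrow(X)$ for every $j$ at no extra cost. You would recover this by running Courant--Fischer on the pulled-back subspace $S^{-1}U$, where $U$ is spanned by the top $k$ eigenvectors of $X$.
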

	
	\begin{proof}
		Since $S\ge I$, we have $S^2\ge I$. Thus
		$X^{1/2}S^2X^{1/2}\ge X.$
		Therefore,
		\[
		\|SXS\|
		=
		\lambda_{\max}(SXS)
		= \lambda_{\max}(X^{1/2}S^2X^{1/2})
		\ge \lambda_{\max}(X)=\|X\|.
		\qedhere\]
	\end{proof}

	
	\begin{proposition}\label{prop:weak-log-near-means}
		Let \(A,B\in\mathbb P_n\). 
		The following comparisons hold.
		\begin{enumerate}
			\item If \(A\ge I\), then
			\[
			A^{-1}\star_tB
			\prec_{w\log}
			A\widetilde{\natural}_tB,
			\qquad 0\le t\le1.
			\]
			Moreover,
			\[
			A\widetilde{\natural}_tB
			\prec_{w\log}
			A^{-1}\star_tB,
			\qquad 1\le t\le\frac32.
			\]
			
			\item If \(A\le I\), then
			\[
			A\widetilde{\natural}_tB
			\prec_{w\log}
			A^{-1}\star_tB,
			\qquad -\frac12\le t\le0.
			\]
			
		\end{enumerate}
	\end{proposition}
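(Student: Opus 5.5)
For part (1) I would use the reductions $A^{-1}\star_tB=A^{-1}\sharp_tD$ (from \eqref{eq:star-to-sharp} applied to the pair $(A^{-1},B)$) and $A\widetilde{\natural}_tB=A\natural_tD$ (Theorem~\ref{thm:near-spectral-basic}(1)). Here $D=A^{-1}\star_1B$, and $A^{-1}\star_1B$ is exactly the matrix in \eqref{eq:near-spectral-t1}. Both sides are then means of the same second input $D$ with the first inputs $A^{-1}$ and $A$, so the proof splits into two comparisons: a geometric-mean comparison between $A^{-1}\sharp_tD$ and $A\sharp_tD$, and the classical comparison between $A\sharp_tD$ and $A\natural_tD$.

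\emph{Part (1), $0\le t\le1$.} Since $A\ge I$, we have $A^{-1}\le A$. The Kubo--Ando monotonicity of $\sharp_t$ in the first argument then gives $A^{-1}\sharp_tD\le A\sharp_tD$, and hence $A^{-1}\sharp_tD\prec_{w\log}A\sharp_tD$ by \eqref{eq:order-hierarchy}. Next I would invoke the known relation $A\sharp_tD\prec_{\log}A\natural_tD$ for $t\in[0,1]$ (Gan--Huang; it is also implicit in the chain of \cite{JEONG2025}). Transitivity then gives the first assertion.

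\emph{Part (1), $1\le t\le\frac32$.} By Lemma~\ref{lem:parameter-reversal}, $X\sharp_tD=D\sharp_{1-t}X$ with $1-t\in[-\frac12,0]$. Since $x\mapsto x^{r}$ is operator monotone decreasing for $r\in[-1,0]$, the map $X\mapsto D\sharp_{1-t}X$ is order reversing. Hence $A^{-1}\le A$ gives $A\sharp_tD\le A^{-1}\sharp_tD$. It remains to use the reversed Gan--Huang relation $A\natural_tD\prec_{\log}A\sharp_tD$ for $t\ge1$ (more precisely, for $t\in[1,\frac32]$), taken from the extended-parameter results of \cite{GAN2024}. This is the step I am least sure of, because it relies on the exact range in which the extended comparison is available in the literature. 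If that range proves too narrow, my fallback is the exterior-power argument below.

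\emph{Part (2), $-\frac12\le t\le0$.} Here I would argue by largest eigenvalues and then lift to all partial products via Lemma~\ref{lem:exterior-powers} and \eqref{eq:exterior-largest}. Every matrix involved is built from products, real powers, and congruences, all of which commute with $\Lambda^k$, and $A\le I$ implies $\Lambda^kA\le I$. So it suffices to prove $\lambda_{\max}(A\natural_tD)\le\lambda_{\max}(A^{-1}\sharp_tD)$. Set $N=A^{1/2}DA^{1/2}$; by the isospectrality used in Theorem~\ref{thm:normalized-eigenvalue-comparison}, $A^{-1}\sharp_tD=A^{-1/2}N^tA^{-1/2}$. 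Since $A^{-1/2}\ge I$, Lemma~\ref{lem:congruence-norm} gives $\lambda_{\max}(A^{-1}\sharp_tD)\ge\lambda_{\max}(N^t)=\lambda_{\min}(N)^t$, using $t\le0$.

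On the other side, let $G=A^{-1}\sharp D=A^{-1/2}N^{1/2}A^{-1/2}$. Then $A\natural_tD=G^tAG^t\le G^{2t}$ because $A\le I$, so $\lambda_{\max}(A\natural_tD)\le\lambda_{\min}(G)^{2t}$. Since $N^{1/2}\ge\lambda_{\min}(N)^{1/2}I$ and $A^{-1}\ge I$, we get $G\ge\lambda_{\min}(N)^{1/2}A^{-1}\ge\lambda_{\min}(N)^{1/2}I$. Because $2t\le0$, this yields $\lambda_{\min}(G)^{2t}\le\lambda_{\min}(N)^{t}$, which closes the chain. Applying the same argument to $\Lambda^kA$ and $\Lambda^kB$ (equivalently $\Lambda^kD$) gives the partial-product inequalities for every $k$, hence the weak log-majorization. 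The condition $t\ge-\frac12$ does not appear to be needed in this argument, which I would double-check. If necessary, the same exterior-power scheme would also be my fallback for the range $1\le t\le\frac32$ in part (1).
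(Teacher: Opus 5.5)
Your part (1) is the paper's argument. You use the same reduction to $A^{-1}\sharp_tD$ versus $A\natural_tD$, the same L\"owner comparison between $A^{-1}\sharp_tD$ and $A\sharp_tD$, and then a cited log-majorization between $A\sharp_tD$ and $A\natural_tD$. Your order-reversal of $X\mapsto D\sharp_{1-t}X$ is the paper's L\"owner--Heinz step in different notation.

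For $1\le t\le\frac32$, the paper takes $A\natural_tD\prec_{\log}A\sharp_tD$ from Furuichi--Seo \cite[Theorem 4.3, Proposition 4.1(ii)]{Furuichi03052025}, via the intermediate matrix $A^{\frac{1-t}{2}}D^tA^{\frac{1-t}{2}}$, rather than from \cite{GAN2024}. That is the reference to use. (The paper also cites \cite{Furuichi03052025} for the range $0\le t\le1$.) Do not rely on your fallback for this range. With $A\ge I$ one has $G^tAG^t\ge G^{2t}$, and congruence by $A^{-1/2}\le I$ lowers $\lambda_{\max}$. So both scalar estimates from your part (2) point the wrong way, and that scheme does not transfer.

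Your part (2) is correct and takes a genuinely different route. The paper first proves $A\sharp_tD\prec_{w\log}A^{-1}\sharp_tD$ on exterior powers, by bounding both norms against $\|(\Lambda^kD)^{-1}\|^{-t}$. It then combines this with the negative-parameter chain $A\natural_tD\prec_{\log}A^{\frac{1-t}{2}}D^tA^{\frac{1-t}{2}}\prec_{\log}A\sharp_tD$ from \cite{Furuichi03052025}. The hypothesis $t\ge-\frac12$ enters only through that citation; the exterior-power estimates hold for every $t\le0$.

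You instead compare $A\natural_tD$ with $A^{-1}\sharp_tD$ directly through the scalar $\lambda_{\min}(N)^t$. This uses only $A\le I$, congruence, and Lemma~\ref{lem:congruence-norm}. Because the argument is valid for arbitrary $D\in\mathbb P_n$, it lifts verbatim to the pair $(\Lambda^kA,\Lambda^kD)$.

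Every step checks out, with one small correction: $A^{-1}\sharp_tD=A^{-1/2}N^tA^{-1/2}$ is simply the definition of $\sharp_t$, and no isospectrality is involved. Your suspicion is also right: the argument gives $A\widetilde{\natural}_tB\prec_{w\log}A^{-1}\star_tB$ for every $t\le0$ whenever $A\le I$. So your route is self-contained and yields a larger parameter range. The paper's route, in exchange, places $A\sharp_tD$ and $A^{\frac{1-t}{2}}D^tA^{\frac{1-t}{2}}$ between the two sides.
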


	\begin{proof}
		Set
		$
		D=A^{-1/2}B^{1/2}AB^{1/2}A^{-1/2}.
		$
		The definition of \(D\) yields
		\[
		A^{1/2}DA^{1/2}=B^{1/2}AB^{1/2}.
		\]
		Consequently,
		\begin{align*}
			A^{-1}\sharp_tD
			&=
			A^{-1/2}
			\left(A^{1/2}DA^{1/2}\right)^t
			A^{-1/2}\\
			&=
			A^{-1/2}
			\left(B^{1/2}AB^{1/2}\right)^t
			A^{-1/2}
			=
			A^{-1}\star_tB.
		\end{align*}
		Taking \(t=\frac12\) gives
		\(
		A^{-1}\sharp D=A^{-1}\star B,
		\)
		and therefore
		\[
		A\natural_tD
		=
		(A^{-1}\sharp D)^tA(A^{-1}\sharp D)^t
		=
		A\widetilde{\natural}_tB.
		\]
		Hence,
		\begin{equation}\label{eq:near-identities}
			A^{-1}\star_tB=A^{-1}\sharp_tD,
			\qquad
			A\widetilde{\natural}_tB=A\natural_tD.
		\end{equation}
		
		\begin{enumerate}
			\item 
			Suppose that \(A\ge I\). Then \(A^{-1}\le A\).
			
			First let \(0\le t\le1\).
			The monotonicity of the weighted geometric mean gives
			\[
			A^{-1}\sharp_tD\le A\sharp_tD.
			\]
			By \eqref{eq:order-hierarchy} and the standard log-majorization
			$A\sharp_tD\prec_{\log}A\natural_tD$ for $0\le t\le1$
			(see, e.g., \cite{Furuichi03052025}), we obtain
			\[
			A^{-1}\sharp_tD
			\prec_{w\log}
			A\sharp_tD
			\prec_{\log}
			A\natural_tD.
			\]
			Therefore, by transitivity and \eqref{eq:near-identities},
			\[
			A^{-1}\star_tB
			\prec_{w\log}
			A\widetilde{\natural}_tB,
			\qquad 0\le t\le1.
			\]
			
			Next, consider
			\(
			1\le t\le\frac32.\)
			Write \(t=1+s\), where \(0\le s\le\frac12\). By \eqref{eq:parameter-reversal},
			\begin{align*}
				A\sharp_tD
				&=D\sharp_{1-t}A=
				D\sharp_{-s}A =
				D^{1/2}
				\left(D^{1/2}A^{-1}D^{1/2}\right)^s
				D^{1/2},
			\end{align*}
			and
			\begin{align*}
				A^{-1}\sharp_tD
				&=
				D\sharp_{-s}A^{-1} =
				D^{1/2}
				\left(D^{1/2}AD^{1/2}\right)^s
				D^{1/2}.
			\end{align*}
			Since \(A\ge I\), we have \(A^{-1}\le A\), and hence
			\[
			D^{1/2}A^{-1}D^{1/2}
			\le
			D^{1/2}AD^{1/2}.
			\]
			Because \(0\le s\le\frac12\), the L\"owner--Heinz inequality gives
			\[
			\left(D^{1/2}A^{-1}D^{1/2}\right)^s
			\le
			\left(D^{1/2}AD^{1/2}\right)^s.
			\]
			After applying a congruence by \(D^{1/2}\), we obtain
			$
			A\sharp_tD
			\le
			A^{-1}\sharp_tD.
			$
			Consequently,
			\begin{equation}\label{eq:positive-large-geometric}
				A\sharp_tD
				\prec_{w\log}
				A^{-1}\sharp_tD.
			\end{equation}
			
			For \(1\le t\le\frac32\), the log-majorization inequalities of
			Furuichi and Seo \cite[Theorem 4.3, Proposition 4.1(ii)]
			{Furuichi03052025} yield
			\[
			A\natural_tD
			\prec_{\log}
			A^{\frac{1-t}{2}}D^tA^{\frac{1-t}{2}}
			\prec_{\log}
			A\sharp_tD.
			\]
			Combining this with \eqref{eq:positive-large-geometric}, we obtain
			\[
			A\natural_tD
			\prec_{w\log}
			A\sharp_tD
			\prec_{w\log}
			A^{-1}\sharp_tD.
			\]
			Hence, by transitivity and \eqref{eq:near-identities},
			\[
			A\widetilde{\natural}_tB
			\prec_{w\log}
			A^{-1}\star_tB,
			\qquad
			1\le t\le\frac32.
			\]
			This proves the first part.

			\item 
			Suppose that \(A\le I\) and \(-\frac12\le t\le0\). Write
			\(t=-s\), where \(0\le s\le\frac12\). We first prove that
			\begin{equation}\label{eq:negative-geometric}
				A\sharp_tD
				\prec_{w\log}
				A^{-1}\sharp_tD.
			\end{equation}
			
			Fix \(1\le k\le n\), and set
			\[
			A_k=\Lambda^k(A),
			\qquad
			D_k=\Lambda^k(D).
			\]
			Since \(A\le I\), Lemma~\ref{lem:exterior-powers} yields
			\[
			A_k=\Lambda^k(A)
			\le \Lambda^k(I)
			=I.
			\]
			Moreover, the multiplicative and power-preserving properties of the
			exterior power give
			\begin{equation}\label{eq:exterior-geometric}
				\Lambda^k(A\sharp_tD)
				=
				A_k\sharp_tD_k,
				\qquad
				\Lambda^k(A^{-1}\sharp_tD)
				=
				A_k^{-1}\sharp_tD_k.
			\end{equation}
			Since \(t=-s\),
			\[
			A_k\sharp_{-s}D_k
			=
			A_k^{1/2}
			\left(A_k^{1/2}D_k^{-1}A_k^{1/2}\right)^s
			A_k^{1/2}.
			\]
			Because \(A_k\le I\), we have \(\|A_k\|\le1\). Therefore,
			\begin{equation}\label{eq:negative-upper}
				\|A_k\sharp_{-s}D_k\|
				\le
				\|A_k\|
				\left\|A_k^{1/2}D_k^{-1}A_k^{1/2}\right\|^s \notag
				\le
				\|A_k\|^{1+s}\|D_k^{-1}\|^s  
				 \le
				\|D_k^{-1}\|^s.
			\end{equation}
			On the other hand,
			\[
			A_k^{-1}\sharp_{-s}D_k
			=
			A_k^{-1/2}
			\left(A_k^{-1/2}D_k^{-1}A_k^{-1/2}\right)^s
			A_k^{-1/2}.
			\]
			Since \(A_k\le I\), we have \(A_k^{-1/2}\ge I\). Applying
			Lemma~\ref{lem:congruence-norm} twice gives 
			\begin{equation}\label{eq:negative-lower}		    
				\|A_k^{-1}\sharp_{-s}D_k\|
				\ge
				\left\|
				\left(A_k^{-1/2}D_k^{-1}A_k^{-1/2}\right)^s
				\right\| \notag
				=
			\left\|A_k^{-1/2}D_k^{-1}A_k^{-1/2}\right\|^s 
				\ge
				\|D_k^{-1}\|^s.
			\end{equation}
			Combining \eqref{eq:negative-upper} and
			\eqref{eq:negative-lower}, and then using
			\eqref{eq:exterior-geometric}, we obtain
			\[
			\left\|\Lambda^k(A\sharp_tD)\right\|
			\le
			\left\|\Lambda^k(A^{-1}\sharp_tD)\right\|.
			\]
			Since
			\[
			\left\|\Lambda^k(X)\right\|
			=
			\prod_{j=1}^k\lambda_j^\downarrow(X),
			\qquad X\in\mathbb P_n,
			\]
			it follows that
			\[
			\prod_{j=1}^k\lambda_j^\downarrow(A\sharp_tD)
			\le
			\prod_{j=1}^k\lambda_j^\downarrow(A^{-1}\sharp_tD)
			\]
			for every \(k=1,\ldots,n\). This proves
			\eqref{eq:negative-geometric}.
			
			We now apply the negative-parameter log-majorization inequalities of
			Furuichi and Seo \cite[Theorem 4.3 and Proposition 4.1(iv)]
			{Furuichi03052025}, which give
			\[
			A\natural_tD
			\prec_{\log}
			A^{\frac{1-t}{2}}D^tA^{\frac{1-t}{2}}
			\prec_{\log}
			A\sharp_tD,
			\qquad -\frac12\le t\le0.
			\]
			Thus, by transitivity,
			$
			A\natural_tD\prec_{\log}A\sharp_tD,$ for 
			$-\frac12\le t\le0.
			$
			Combining this with \eqref{eq:negative-geometric}, we obtain
			\[
			A\natural_tD
			\prec_{w\log}
			A\sharp_tD
			\prec_{w\log}
			A^{-1}\sharp_tD.
			\]
			Therefore,
			$
			A\natural_tD
			\prec_{w\log}
			A^{-1}\sharp_tD.
			$
            
			Finally, \eqref{eq:near-identities} yields
			\[
			A\widetilde{\natural}_tB
			\prec_{w\log}
			A^{-1}\star_tB,
			\qquad -\frac12\le t\le0.
			\qedhere \]
	
		\end{enumerate}
	
	\end{proof}
	
	\begin{corollary}\label{cor:scaled-weak-log-near-means}
		Let $A,B\in\mathbb P_n$, and write
		\[
		\lambda_1^\downarrow(A)\ge\cdots\ge\lambda_n^\downarrow(A)>0.
		\]
		Then
		\begin{align}
			A\widetilde{\natural}_tB
			&\prec_{w\log}
			\lambda_1^\downarrow(A)^{2-2t}(A^{-1}\star_tB),
			\qquad -\frac12\le t\le0;\label{eq:scaled-negative}\\
			A\widetilde{\natural}_tB
			&\succ_{w\log}
			\lambda_n^\downarrow(A)^{2-2t}(A^{-1}\star_tB),
			\qquad 0\le t\le1;\label{eq:scaled-middle}\\
			A\widetilde{\natural}_tB
			&\prec_{w\log}
			\lambda_n^\downarrow(A)^{2-2t}(A^{-1}\star_tB),
			\qquad 1\le t\le\frac32. \label{eq:scaled-large}
		\end{align}
	\end{corollary}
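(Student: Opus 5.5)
The plan is to deduce each inequality from Proposition~\ref{prop:weak-log-near-means} by rescaling $A$ by a positive scalar $c$, so that $cA\ge I$ or $cA\le I$, and then undoing the scaling through the homogeneity of both means.

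\textbf{Step 1 (homogeneity).} By Theorem~\ref{thm:near-spectral-basic}(3) with $a=c$, $b=1$,
\[
(cA)\widetilde{\natural}_tB=c^{1-t}\bigl(A\widetilde{\natural}_tB\bigr).
\]
By Theorem~\ref{thm:near-geometric-basic}(2) applied to the pair $(c^{-1}A^{-1},B)$,
\[
(cA)^{-1}\star_tB=c^{-(1-t)}\bigl(A^{-1}\star_tB\bigr).
\]
Write $P=A\widetilde{\natural}_tB$ and $S=A^{-1}\star_tB$. Weak log-majorization is preserved when both sides are multiplied by the same positive scalar. Hence, after multiplying by $c^{t-1}$:
\[
c^{1-t}P\prec_{w\log}c^{t-1}S \iff P\prec_{w\log}c^{2t-2}S,
\]
and
\[
c^{t-1}S\prec_{w\log}c^{1-t}P \iff c^{2t-2}S\prec_{w\log}P.
\]

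\textbf{Step 2 (choice of scalar in each range).}
\begin{itemize}
\item For $-\frac12\le t\le0$, take $c=\lambda_1^\downarrow(A)^{-1}$, so that $cA\le I$. Proposition~\ref{prop:weak-log-near-means}(2) applied to $(cA,B)$ gives $c^{1-t}P\prec_{w\log}c^{t-1}S$. By Step 1 this is $P\prec_{w\log}\lambda_1^\downarrow(A)^{2-2t}S$, which is \eqref{eq:scaled-negative}.
\item For $0\le t\le1$, take $c=\lambda_n^\downarrow(A)^{-1}$, so that $cA\ge I$. The first part of Proposition~\ref{prop:weak-log-near-means}(1) gives $c^{t-1}S\prec_{w\log}c^{1-t}P$. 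By Step 1 this is $\lambda_n^\downarrow(A)^{2-2t}S\prec_{w\log}P$, which is \eqref{eq:scaled-middle}.
\item For $1\le t\le\frac32$, use the same $c=\lambda_n^\downarrow(A)^{-1}$ and the second part of Proposition~\ref{prop:weak-log-near-means}(1). This gives $c^{1-t}P\prec_{w\log}c^{t-1}S$, and Step 1 turns it into \eqref{eq:scaled-large}.
\end{itemize}

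\textbf{Main obstacle.} There is no real obstacle beyond bookkeeping. One must track the exponent signs carefully, using $c^{2t-2}=\lambda^{2-2t}$ when $c=\lambda^{-1}$. One must also apply the homogeneity of $\star_t$ to the inverted first argument $(cA)^{-1}=c^{-1}A^{-1}$, which is what produces the factor $c^{-(1-t)}$ rather than $c^{1-t}$.
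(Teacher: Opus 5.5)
Your proposal is correct and follows the paper's proof: you rescale $A$ by the reciprocal of an extreme eigenvalue so that Proposition~\ref{prop:weak-log-near-means} applies, then undo the scaling with the homogeneity identities $(cA)\widetilde{\natural}_tB=c^{1-t}(A\widetilde{\natural}_tB)$ and $(cA)^{-1}\star_tB=c^{t-1}(A^{-1}\star_tB)$. The paper writes $A_c=c^{-1}A$ with $c=\lambda_1^\downarrow(A)$ or $c=\lambda_n^\downarrow(A)$, which is your choice $c=\lambda^{-1}$, and your bookkeeping $c^{2t-2}=\lambda^{2-2t}$ checks out in all three parameter ranges.
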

	
	\begin{proof}
		Let $A_c=c^{-1}A$. By homogeneity,
		\[
		A_c\widetilde{\natural}_tB
		=c^{t-1}(A\widetilde{\natural}_tB),
		\qquad
		A_c^{-1}\star_tB
		=c^{1-t}(A^{-1}\star_tB).
		\]
		For $c=\lambda_1^\downarrow(A)$ we have $A_c\le I$, so
		Proposition~\ref{prop:weak-log-near-means}(2) gives
		\eqref{eq:scaled-negative}. For $c=\lambda_n^\downarrow(A)$ we have
		$A_c\ge I$, and Proposition~\ref{prop:weak-log-near-means}(1) gives
		\eqref{eq:scaled-middle} and \eqref{eq:scaled-large} after multiplication
		by the appropriate positive scalar.
	\end{proof}

	The log-majorization at \(t=1\) in
	Theorem~\ref{thm:point-one-logmajorization}, the midpoint isospectrality
	in Theorem~\ref{thm:midpoint-eigenvalues}, and numerical evidence motivate
	the following conjecture.

	\begin{conjecture}\label{conj:near-vs-spectral}
		For \(A,B\in\mathbb P_n\),
		\begin{align*}
			A\widetilde{\natural}_tB
			&\prec_{\log}A\natural_tB,
			\qquad 0<t<\frac12;\\
			A\natural_tB
			&\prec_{\log}A\widetilde{\natural}_tB,
			\qquad \frac12<t\le1.
		\end{align*}
	\end{conjecture}

\section*{Statements and Declarations}

\paragraph{Competing interests.}
The authors declare  no financial or non-financial interests
relevant to the content of this article.

\paragraph{Author contributions}
All authors contributed equally in carrying out the research and writing the manuscript.

	\bibliographystyle{abbrv}
	\bibliography{multivariate}

@book{bhatia07,
  address = {Princeton, NJ, USA},
  author = {Bhatia, Rajendra},
  publisher = {Princeton University Press},
  series = {Princeton Series in Applied Mathematics},
  title = {Positive Definite Matrices},
  url = {http://press.princeton.edu/titles/8445.html},
  year = 2007
}

@article{Furuichi03052025,
author = {Shigeru Furuichi and Yuki Seo},
title = {Some inequalities for spectral geometric mean with applications},
journal = {Linear and Multilinear Algebra},
volume = {73},
number = {7},
pages = {1508--1527},
year = {2025},
publisher = {Taylor \& Francis},
doi = {10.1080/03081087.2024.2433512},
URL = {  https://doi.org/10.1080/03081087.2024.2433512 
},
eprint = {     https://doi.org/10.1080/03081087.2024.2433512  }
}

@book{HornJohnson2013,
  author    = {Roger A. Horn and Charles R. Johnson},
  title     = {Matrix Analysis},
  edition   = {2},
  publisher = {Cambridge University Press},
  year      = {2013}
}

@article{Franco2025,
  title={Near-geometric mean of positive definite operators},
  author={Jose A. Franco and Luyining Gan and Sejong Kim},
  journal={Positivity},
  year={2025},
  volume={29},
  url={https://api.semanticscholar.org/CorpusID:281408146}
}

@article{FiedlerPtak1997,
  author  = {Fiedler, Miroslav and Pt{\'a}k, Vlastimil},
  title   = {A New Positive Definite Geometric Mean of Two
             Positive Definite Matrices},
  journal = {Linear Algebra and its Applications},
  volume  = {251},
  pages   = {1--20},
  year    = {1997},
  doi     = {10.1016/0024-3795(95)00540-4}
}

@article{DUMITRU2023,
  author    = {Raluca Dumitru and Jose A. Franco},
  title     = {Near order and metric-like functions on the cone of positive definite matrices},
  journal   = {Positivity},
  year      = {2023},
  volume    = {28},
  number    = {1},
  pages     = {2},
  doi       = {10.1007/s11117-023-01020-5},
  url       = {https://doi.org/10.1007/s11117-023-01020-5},
  issn      = {1572-9281}
}

@article {GAN2024,
    AUTHOR = {Gan, Luyining and Huang, Huajun},
     TITLE = {Order relations of the {W}asserstein mean and the spectral
              geometric mean},
   JOURNAL = {Electron. J. Linear Algebra},
  FJOURNAL = {Electronic Journal of Linear Algebra},
    VOLUME = {40},
      YEAR = {2024},
     PAGES = {491--505},
      ISSN = {1081-3810},
   MRCLASS = {15A42 (15A45 15B48)},
  MRNUMBER = {4794740},
MRREVIEWER = {Mohsen\ Kian},
}

@article{GAN2021,
title = {On two geometric means and sum of adjoint orbits},
journal = {Linear Algebra and its Applications},
volume = {631},
pages = {156-173},
year = {2021},
issn = {0024-3795},
doi = {https://doi.org/10.1016/j.laa.2021.08.029},
url = {https://www.sciencedirect.com/science/article/pii/S0024379521003268},
author = {Luyining Gan and Xuhua Liu and Tin-Yau Tam}
}

@article{Kim2020a,
  title={Operator inequalities and gyrolines of the weighted geometric means},
  author={Sejong Kim},
  journal={Mathematical Inequalities \& Applications},
  year={2020},
  url={https://api.semanticscholar.org/CorpusID:221836409}
}

@article{Kim2015,
  author   = {Kim, Sejong and Lee, Hosoo},
  title    = {Relative operator entropy related with the spectral geometric mean},
  journal  = {Analysis and Mathematical Physics},
  year     = {2015},
  volume   = {5},
  number   = {3},
  pages    = {233--240},
  doi      = {10.1007/s13324-015-0099-z},
  url      = {https://doi.org/10.1007/s13324-015-0099-z},
  issn     = {1664-235X}
}

@article{BHATIA20191,
title = {Inequalities for the Wasserstein mean of positive definite matrices},
journal = {Linear Algebra and its Applications},
volume = {576},
pages = {108-123},
year = {2019},
note = {Proceedings of the ILAS 2017 Conference in Ames, Iowa},
issn = {0024-3795},
doi = {https://doi.org/10.1016/j.laa.2018.03.017},
url = {https://www.sciencedirect.com/science/article/pii/S0024379518301277},
author = {Rajendra Bhatia and Tanvi Jain and Yongdo Lim}
}

@article{Hiai2024,
title={Log-majorization and matrix norm inequalities with application to quantum information},
  author = {Hiai, Fumio},
  journaltitle = {Acta Scientiarum Mathematicarum},
  date         = {2024-12},
  volume       = {90},
  number       = {3},
  pages        = {527--549},
  doi          = {10.1007/s44146-024-00142-w},
  issn         = {2064-8316},
  url          = {https://doi.org/10.1007/s44146-024-00142-w},
}

@article {JEONG2023,
    AUTHOR = {Jeong, Miran and Kim, Sejong},
     TITLE = {Weak log-majorization and inequalities of power means},
   JOURNAL = {Electron. J. Linear Algebra},
  FJOURNAL = {Electronic Journal of Linear Algebra},
    VOLUME = {39},
      YEAR = {2023},
     PAGES = {607--620},
      ISSN = {1081-3810},
   MRCLASS = {15B48 (15A45)},
  MRNUMBER = {4694552},
       DOI = {10.13001/ela.2023.8133},
       URL = {https://doi.org/10.13001/ela.2023.8133},
}

@article{JEONG2025,
title = {New weighted spectral geometric mean and quantum divergence},
journal = {Linear Algebra and its Applications},
volume = {726},
pages = {164-179},
year = {2025},
issn = {0024-3795},
doi = {https://doi.org/10.1016/j.laa.2025.07.025},
url = {https://www.sciencedirect.com/science/article/pii/S0024379525003179},
author = {Miran Jeong and Sejong Kim and Tin-Yau Tam}
}

@article{DinhTam2024,
  author       = {Trung Hoa Dinh and Tin-Yau Tam and Trung-Dung Vuong},
  title        = {A NEW WEIGHTED SPECTRAL GEOMETRIC MEAN AND PROPERTIES},
  journal      = {Electronic Journal of Linear Algebra},
  issn         = {1081-3810},
  year         = {2024},
  volume       = {40},
  pages        = {333--342},
  month        = feb,
  note         = {A publication of the International Linear Algebra Society},
  mrclass      = {15A16, 15A45, 22E46}
}

@article{PuszWoronowicz1975,
  author  = {Pusz, W. and Woronowicz, S. L.},
  title   = {Functional Calculus for Sesquilinear Forms and the
             Purification Map},
  journal = {Reports on Mathematical Physics},
  volume  = {8},
  number  = {2},
  pages   = {159--170},
  year    = {1975},
  doi     = {10.1016/0034-4877(75)90061-0}
}

@article{KuboAndo1980,
  author  = {Kubo, Fumio and Ando, Tsuyoshi},
  title   = {Means of Positive Linear Operators},
  journal = {Mathematische Annalen},
  volume  = {246},
  number  = {3},
  pages   = {205--224},
  year    = {1980},
  doi     = {10.1007/BF01371042}
}

@article{Moakher2005,
  author  = {Moakher, Maher},
  title   = {A Differential Geometric Approach to the Geometric Mean
             of Symmetric Positive-Definite Matrices},
  journal = {SIAM Journal on Matrix Analysis and Applications},
  volume  = {26},
  number  = {3},
  pages   = {735--747},
  year    = {2005},
  doi     = {10.1137/S0895479803436937}
}

@article{BhatiaHolbrook2006,
  author  = {Bhatia, R. and Holbrook, J.},
  title   = {Riemannian geometry and matrix geometric means},
  journal = {Linear Algebra and its Applications},
  volume  = {413},
  year    = {2006},
  pages   = {594--618},
  doi     = {10.1016/j.laa.2005.08.025}
}

@article{Batchelor2005,
  author  = {Batchelor, P. G. and Moakher, M. and Atkinson, D. and Calamante, F. and Connelly, A.},
  title   = {A rigorous framework for diffusion tensor calculus},
  journal = {Magnetic Resonance in Medicine},
  volume  = {53},
  year    = {2005},
  pages   = {221--225},
  doi     = {10.1002/mrm.20334}
}

@article{BiniIannazzo2024,
  author  = {Bini, D. A. and Iannazzo, B.},
  title   = {Computational aspects of the geometric mean of two matrices: a survey},
  journal = {Acta Scientiarum Mathematicarum (Szeged)},
  volume  = {90},
  year    = {2024},
  pages   = {349--389},
  doi     = {10.1007/s44146-024-00155-5}
}

@inproceedings{ZadehHosseiniSra2016,
  author    = {Zadeh, P. and Hosseini, R. and Sra, S.},
  title     = {Geometric mean metric learning},
  booktitle = {Proceedings of the 33rd International Conference on Machine Learning},
  series    = {Proceedings of Machine Learning Research},
  volume    = {48},
  year      = {2016},
  pages     = {2464--2471}
}

@article{KimLee2015,
  author  = {Kim, S. and Lee, H.},
  title   = {Relative operator entropy related with the spectral geometric mean},
  journal = {Analysis and Mathematical Physics},
  volume  = {5},
  year    = {2015},
  pages   = {233--240},
  doi     = {10.1007/s13324-015-0099-z}
}

@article{EstaticoDiBenedetto2013,
  author  = {Estatico, C. and Di Benedetto, F.},
  title   = {Shift-invariant approximations of structured shift-variant blurring matrices},
  journal = {Numerical Algorithms},
  volume  = {62},
  number  = {4},
  pages   = {615--635},
  year    = {2013}
}

\end{document}